\pgfplotsset{compat=1.12} 
\newcommand{\thref}[1]{Theorem~\ref{#1}}
\newcommand{\secref}[1]{Section~\ref{#1}}
\newcommand{\figref}[1]{Figure~\ref{#1}}
\newcommand{\tabref}[1]{Table~\ref{#1}}
\newcommand{\bb}[1]{\boldsymbol{#1}}
\newcommand{\RR}{\mathbb{R}}
\newcommand{\dx}{\,\text{dx}}
\DeclareMathAlphabet{\mathpzc}{OT1}{pzc}{m}{it} % kalligrafische Kleinbuchstaben
\let\@latex@error\@gobble
\begin{document}

\title{Topology optimization with worst-case handling of material uncertainties}
%\thanks{Grants or other notes
%about the article that should go on the front page should be
%placed here. General acknowledgments should be placed at the end of the article.}
%\subtitle{Do you have a subtitle?\\ If so, write it here}

%\titlerunning{Short form of title}        % if too long for running head

\author{Jannis Greifenstein \and 
        Michael Stingl       %etc.
}

%\authorrunning{Short form of author list} % if too long for running head

\institute{Jannis Greifenstein \and Michael Stingl \at 
              Friedrich-Alexander-Universit\"at Erlangen-N\"urnberg (FAU),\\
              Mathematical Optimization, Department of Mathematics,\\
              Cauerstr. 11,
              91058 Erlangen, Germany. \\
             % Tel.: +49-9131-8520849 \\
             % Fax: +123-45-678910\\
              \email{jannis.greifenstein@fau.de, michael.stingl@fau.de}          
              %
              % \\
%             \emph{Present address:} of F. Author  %  if needed
}

\date{Received: date / Accepted: date}
% The correct dates will be entered by the editor

\sloppy
\maketitle

\begin{abstract}
In this article a topology optimization method is developed, which is aware of material uncertainties. The uncertainties are handled in a worst-case sense, i.e. the worst possible material distribution over a given uncertainty set is taken into account for each topology. The worst-case approach leads to a minimax problem, which is analyzed throughout the paper. A conservative convex relaxation for the inner maximization problem is suggested, which allows to treat the minimax problem by minimization of an optimal value function. A Tikhonov type and a barrier regularization scheme are developed, which render the resulting minimization problem continuously differentiable. The barrier regularization scheme turns out to be more suitable for the practical solution of the problem, as it can be closely linked to a highly efficient interior point approach used for the evaluation of the optimal value function and its gradient. Based on this, the outer minimization problem can be approached by a gradient based optimization solver like the method of moving asymptotes. Examples from additive manufacturing as well as material degradation are examined, demonstrating the practical applicability as well as the efficiency of the suggested method. Finally, the impact of the convex relaxation of the inner problem is investigated. It is observed that for large material deviations, the robust solution may become too conservative. As a remedy, a RAMP-type continuation scheme for the inner problem is suggested and numerically tested.

\keywords{Topology optimization; Robust optimization; Interior point methods; Additive manufacturing}
%4-6 keywords
\end{abstract}

\section{Introduction}
In the last decade, structural optimization has seen a vast increase in
publications considering uncertainties in the optimization problem.
Uncertainties in structural optimization can arise from a multitude of different
sources, such as modelling errors, inaccurate loading scenarios, manufacturing
precision or defective material. In this work, we will focus on the latter
case for compliance problems in linear elasticity.\\
Local material failure can be caused by unexpectedly high loads, leading to
yielding and a local loss in stiffness. Other sources of local material
degradation may be corrosion, fatigue failure or accidents. Furthermore, the
fabrication process may lead to variances in the achieved material
properties. In the linear elasticity setting, all of these material defects may
be modeled by either a reduced material stiffness or a removal of material
accounting for a complete failure of material. \\
In the literature, a number of publications can be found on topology 
optimization under consideration of uncertainties in geometry or material 
properties. Topological uncertainties are considered in \cite{Wang:11:Robust} for 
an etching fabrication process through uniform over- and underetching. 
In \cite{chen2010level}, uncertainties in the loads or in the material
parameters are represented with random fields using a Karhunen-Loeve expansion for a 
level-set method. In \cite{lazarov2012topology}, the material properties and the
topology are subject to random field uncertainties using a collocation method for 
density-based topology optimization. Both of the latter methods are able to yield robustly
optimized structures, but suffer from a seriously increasing computational
complexity if the distribution of the uncertain degradation is allowed to vary
not only on a broad scale over the body. Furthermore, even if the uncertainty
distribution is sufficiently well known and can be represented properly using the
random field uncertainties, the methods may only give a probability for the
actual structural performance. In this work, on the other hand, we would like to
obtain an actual bound for the structural performance, even if the allowed
uncertainties in the material properties are distributed in the worst possible
way. One method to obtain worst-case structures accounting for total material failure
is given in \cite{jansen2014topology}. Here, the design domain is subdivided
into finite element (FE) patches and, in each iteration, the material is removed for the FE patch in
a worst-case manner. However, one FE simulation is needed per considered FE
patch, so a fine resolution of material failure or allowing for the failure of
a combination of multiple of the considered FE patches would lead to a
prohibitively high computational complexity. In \cite{bendsoe1998method}, a topology design problem in a
continuum with worst-case material defects is solved using a relaxation to a
two-scale problem with material mixtures of damaged and undamaged material.
However, this is not done without the authors stating that a mathematically
consistent model was not pursued. More publications studying worst-case optimization models can
be found for truss optimization problems. In \cite{calafiore2008optimization},
robust truss optimization problems for uncertainties of polytopic shape in load and Young's
modulus are reformulated as convex semi-definite programs. The complexity of the
reformulated problem, however, increases drastically with the number of vertices
of the uncertainty polytope. The method thus only allows for the solution of
problems with relatively low-dimensional uncertainty characteristics.
%\todog{viel ueber Komplexitaet, aber die Methode hier auch sehr %komplex} 
In
\cite{achtziger1999optimal}, a bilevel truss
optimization is discussed for minimum compliance with worst case material degradation. In fact, it is shown that the
robust truss topology stays the same for the used material degradation model and
pure cross-sectional bar area optimization without upper bounds. \\
Contrary to the discussed publications, the aim of this work is to provide a
mathematically rigorous model which is capable of solving continuum topology
optimization problems with a worst-case approach for uncertain material properties, covering local failure as well as manufacturing uncertainties arising, e.g., from powderbed based additive manufacturing technologies.

The structure of the paper is as follows: In section 2 we develop a minimax problem, which is well defined in the sense that the inner maximization problem can be solved to global optimality. In section 3 differentiability properties for the minimax problem are discussed. Moreover two regularization approaches are suggested. In both cases the marginal function associated with the minimax problem can be shown to be continuously differentiable. In section 4 a complete algorithm for the solution of the regularized minimax problem is established. Finally, in section 5 numerical examples are presented for which uncertainties arise from additive manufacturing as well as local damage.

\section{Problem description}

\subsection{State problem}
The worst-case problem is discussed solely on the discretized level, i.e. we assume that the design domain $\Omega \subset \mathbb{R}^2$ is discretized in $n$ elements $\Omega_i$, on which material properties and densities are constant. Furthermore we introduce a discrete convolution operator as follows: for an element $e$ we define the set $V(e)$ of neighbour elements whose
distance is less than a given radius $R>0$, and $c_e$ the coordinates of the center of element
$e$. Then, for a discretized material density distribution $\rho \in \mathbb{R}^n$, the filtered value on element $e$ becomes \small{
\begin{equation}\label{eq:discrete-filter}
  (F*\bb{\rho})_e = \frac{\sum_{j\in V(e)}\rho_j\int_j
  F(\bb{x}-\bb{c}_e)\dx}{\sum_{j\in V(e)}\int_j
  F(\bb{x}-\bb{c}_e)\dx}
  =:\frac{\sum_{j\in
  V(e)}\rho_jF_{ej}}{\sum_{j\in V(e)}F_{ej}}.
\end{equation}}
%Here by $*$ we denote the operator with discrete coefficients $F_{ej}$
%applied to a design density vector. 
We abbreviate the filtered
pseudo-density variable by $\bb{\tilde{\rho}}:=F*\bb{{\rho}}$. The state
problem then takes the form
\begin{equation}\label{eq:discrete-state}
  \bb{K}(\bb{\rho},\bb{\delta})\bb{u}=\bb{f}
\end{equation}
where $\bb{f}$ is the numerically integrated load form, $\bb{u}$ the
displacement vector and $\bb{K}$ is
the global stiffness matrix assembled by the local stiffness matrices
$\bb{K}=\sum_{e=1}^n\bb{K}_{e}$. The local stiffness matrices are
\begin{equation}\label{eq:discrete-stiff-assemble}
 \bb{K}_e(\bb{\rho},\bb{\delta})=\sum_{l=1}^{n_{ig}}\bb{B}_{el}^\top
\bb{C}_e\left(\bb{\rho},\bb{\delta}\right)\bb{B}_{el},
\end{equation}
where $\bb{B}_{il}$, $l=1,\ldots,n_{ig}$ denote standard
strain displacement matrices and $n_{ig}$ the number of integration points. Finally, the precise form of the material property function $\bb{C}_e\left(\bb{\rho},\bb{\delta}\right)$, which depends on the  pseudo-density $\bb{{\rho}} \in \mathbb{R}^n$ as well as on an uncertainty vector $\bb{{\delta}} \in \mathbb{R}^n$ is discussed in \secref{sec:material-model}.
\\
% The state problem of linear elasticity is discretized using the finite element
% method (FEM), yielding the following linear system of equations (see e.g. \cite{Hughes00})
% \begin{equation}\label{eq:stateeq}
%   \bb{K}\bb{u}=\bb{f},
% \end{equation}
% where $\bb{u}$ is the finite element displacement vector, $\bb{f}$ is the
% vector with the applied nodal forces and the finite element stiffness matrix
% $\bb{K}$ is assembled as
% \begin{equation*}
%  \bb{K}=\sum_{e=1}^n\bb{K}_{e},\quad
%  \bb{K}_e=\sum_{l=1}^{n_{g}}\bb{B}_{el}^\mathrm{T} \bb{C}_e \bb{B}_{el}.
% \end{equation*}
% Here, $\Omega$ denotes the simulation domain which is partitioned into finite
% elements $e$, $\bb{B}$ denote strain displacement matrices, $n_g$ the number of
% integration points and $\bb{C}$ the linear elastic stiffness tensor from Hooke's
% law. Note that $\bb{C}_e$ will be different from element to element in the
% optimization setting according to the material model described in the next
% paragraph.
\subsection{Material model}\label{sec:material-model}
In additive manufacturing, not only the averaged material properties are
anisotropic. The variations in the
material properties discussed in this section can exhibit a different behaviour
in different directions as well. Commonly, the variation of the elastic modulus in
the build direction of a part is expected to be slightly larger than the
variation within the build layer (see e.\,g.
\cite{vdihandbuch,wegner2012betrachtung}). Nevertheless, we choose to use
an isotropic model to keep our final model numerically manageable. The chosen
formulation is based on a publication about truss topology design with
degradation effects (\cite{achtziger1999optimal}), in which the degradation is considered as a decrease of material stiffness. For two elasticity tensors $\bb{C^0}, \bb{C^1}$ the effective material property is computed by interpolation as
\begin{equation}\label{eq:damage-model}
\bb{\hat{C}}(\alpha)=(1-\alpha)\bb{C^0}+\alpha\bb{C^1}
\end{equation}
with $\alpha\in [0,1]$. This simplified model was used for an elastic continuum
by \cite{francfort1993stable} and can be traced back to \cite{kachanov1958time}
and \cite{lemaitre1978theorie}. Achtziger and Bends{\o}e first used this model
for damage in an optimization problem for maximum compliance in a truss
structure in \cite{achtziger1995design}. They revisited the formulation in
\cite{achtziger1998bounds} and were able to obtain lower and upper damage bounds by studying inner and outer approximations. This same inverse interpolation was used in \cite{achtziger1999optimal} for a basic truss topology design problem. Applied to a continuum setting, it
reads
% In practice, the observed material variations are larger in
% build direction than within the building layer. Nevertheless, we choose an isotropic model for the different material properties to keep the final model numerically manageable. We
% base the chosen formulation on a publication about truss topology design with
% degradation effects \cite{achtziger1999optimal}. The formula for the linear elastic tensor
% in Hooke's law is given as
\begin{equation}\label{eq:matdelta}
  \hat{\bb{C}}_e(\bb{\delta})=\left( \frac{1-{\delta}_e
}{E_0}+\frac{{\delta}_e}{E_D}\right)^{-1}\bb{C}
\end{equation}
where $e$ denotes the finite element number, $E_0>E_D>0$ the largest and
smallest Young's moduli allowed and
\begin{equation}\label{eq:unit-material-tensor}
\bb{C} = \frac{1}{(1+\nu)(1-2\nu)}  
  \begin{pmatrix}
    1-\nu & \nu &  \nu & 0 \\
    \nu & 1-\nu &  \nu & 0 \\
    0 & 0 & 0 & (1-2\nu)/2 \\
    \end{pmatrix}
\end{equation}
a unit stiffness tensor with the Poisson's ratio $\nu$ of the material. The
parameter ${\delta}_e$ is used to interpolate between the least stiff and the
stiffest material and is constant on each finite element $e$.
This parameter represents the uncertainty in the material properties. A major
advantage of this formulation will be the concavity of the compliance functional with respect to $\bb{\delta}$ in the optimization problem introduced in the next subsection. Furthermore, the material stiffness is strictly decreasing in ${\delta}_e$, so ${\delta}_e$ can be interpreted as a material degradation parameter.
\\
As usual in the classic SIMP approach, the topology itself is represented
through a pseudo-density variable $\bb{\rho}$. Consequently, the formula for the material property function becomes
\begin{equation}\label{eq:locmat}
  \bb{C}_e(\bb{\rho},\bb{\delta}) = \tilde{\rho}_e^p \hat{\bb{C}}_e(\bb{\delta})
\end{equation}
where $\hat{\bb{C}}_e$ is defined as in (\ref{eq:matdelta}), $\tilde{\rho}$ denotes the
pseudo-density after application of the convolution and $p>1$ is a penalty parameter.\\
\subsection{Optimization problem}\label{sec:optproblem}
We start out with the formulation of a minimax problem, where the compliance is minimized w.\,r.\,t. the topology $\bb{\rho}$ and maximized
w.\,r.\,t. the material uncertainties $\bb{\delta}$:
\begin{align}\label{eq:problem-state}
\left\{
\begin{array}{l}
  \min\limits_{\bb{\rho}\in U_{\text{ad}}}\max\limits_{\bb{\delta}\in
  \Delta_{\text{ad}}}\bb{f}^\mathrm{T}
  \bb{u}(\bb{\rho},\bb{\delta})
  \\
  [3mm]\,\mbox{s.t.:}\ 
  \bb{K}(\bb{\rho},\bb{\delta})\bb{u}=\bb{f}.\\
%   [2mm]\hspace{6.6mm}
%   \sum_{e=1}^n v_e\rho_e \leq V,\\
%   [2mm]\hspace{6.6mm}
%   \sum_{e=1}^n v_e\delta_e/|\Omega| = 0.5,\\
%   [2mm]\hspace{6.6mm}0<\rho_{\text{min}}\leq\rho\leq 1, \\
%   [2mm]\hspace{6.6mm}0 \leq \delta \leq 1
\end{array}
\right. 
\end{align}
Here, $U_{\text{ad}}$ and $\Delta_{\text{ad}}$ are the admissible design sets
containing bound and volume constraints, $\bb{K}(\bb{\rho},\bb{\delta})$
the symmetric and positive definite stiffness matrix, $\bb{u}$ the vector
containing the nodal displacements and $\bb{f}$ the external load vector.
The stiffness matrix $\bb{K}$ is assembled analogously to (\ref{eq:discrete-state}) with $C_e$ as in
(\ref{eq:locmat}), if not stated differently. 
%For the regularization of the pseudo-density $\bb{\rho}$, we
%use a density filter with a filter kernel $F$ as defined in
%(\ref{eq:discrete-filter}) and we will use the abbreviation
%$\bb{\tilde{\rho}}:=F*_h\bb{\rho}$ as before.
%As it will turn out later, the material uncertainties $\bb{\delta}$ do not require a regularization in this optimization problem.\\
The admissible set of pseudo-densities is given as  
\begin{equation}\label{eq:U-ad}
\begin{aligned}
  U_{\text{ad}} = \bigg\{ \bb{\rho}\in\RR^n :&\ \rho_{\text{min}}\leq
  \rho_e \leq 1,\ e=1,\ldots,n,\\ 
  &\sum_{e=1}^n v_e\rho_e/|\Omega| \leq V \bigg\},
\end{aligned}
\end{equation}
where $\rho_{\text{min}}$ is a small positive number, $v_e$ is the volume of
element $\Omega_e$ and $V$ is an upper bound on the admissible volume. Further, we use the uncertainty sets
\begin{equation}\label{eq:delta-ad}
\begin{aligned}
  \Delta_{\text{ad}} = \bigg\{ \bb{\delta}\in\RR^n :&\ 0\leq \delta_e \leq 1,\ e=1,\ldots,n,\\
  &\sum_{e=1}^n v_e\delta_e/|\Omega| = D \bigg\},
\end{aligned}
\end{equation}
or alternatively
\begin{equation}\label{eq:delta-ad-rho}
\begin{aligned}
\Delta_{\text{ad}}(\bb{\rho}) = \bigg\{ \bb{\delta}\in\RR^n :&\ 0\leq \delta_e \leq 1,\ e=1,\ldots,n,\\
&\sum_{e=1}^n v_e \tilde{\rho}_e^p\delta_e/|\Omega| = D \bigg\},
\end{aligned}
\end{equation}
with $D$ being an upper bound on the total material uncertainty. Note that for a given binary
distribution $\bb{\tilde{\rho}}$ the two definitions of $\Delta_{\text{ad}}$ will yield similar solutions of the inner maximization problem, as material deviations will naturally be concentrated on elements where $\tilde{\rho}_e=1$.
The first definition avoids the dependence on the upper level optimization variable $\bb{\rho}$, while the second definition is more rigorous from an application point of view.
\begin{remark}
In the application context of additive manufacturing (see section 5.2), we will further study the impact of additional uncertainty sets of the following form:
\begin{equation}\label{eq:delta-ad-avg} 
\begin{aligned}
  \Delta_{\text{ad}}(\bb{\rho}) = \bigg\{ \bb{\delta}\in[0,1]^n :\ 
  & \sum_{e=1}^n
  \frac{v_e\bb{\tilde{\rho}}_e^p\delta_e}{|\Omega|} = D_1,\\
  &\sum_{e=1}^n \frac{v_e(\delta_e-0.4)^2}{|\Omega|}	 \leq D_2 \bigg\},
\end{aligned}
\end{equation}
or similarly
\begin{equation}\label{eq:delta-ad-avg2}
\begin{aligned}
  \Delta_{\text{ad}}(\bb{\rho}) = \bigg\{ \bb{\delta}\in[0,1]^n :\ 
  & \sum_{e=1}^n
  \frac{v_e\bb{\tilde{\rho}}_e^p\delta_e}{|\Omega|} = D_1,\\
  &\sum_{e=1}^n \frac{v_e(\bb{\tilde{\rho}}_e^p\delta_e-0.4)^2}{|\Omega|} \leq D_2 \bigg\},
\end{aligned}
\end{equation}
For an interpretation of these sets we refer to section 5.2. From a theoretical point of view, for fixed $\bb{\rho}$, either of the sets is compact and convex and thus no special attention is required throughout this section.
\end{remark}

Now, in order to analyse the structure of the minimax problem we rephrase problem \eqref{eq:problem-state} equivalently as:
\begin{align}\label{eq:problem-energy}
% \left\{
% \begin{array}{l}
  \min\limits_{\bb{\rho}\in U_{\text{ad}}}\max\limits_{\bb{\delta}\in
  \Delta_{\text{ad}}, \bb{u}\in \RR^m}2\bb{f}^\top
\bb{u}-\sum_{e=1}^n\bb{u}^\top
\bb{K}_e(\bb{\rho},\bb{\delta}) \bb{u}.
%   \\
%   [2mm]\hspace{6.6mm}
%   \sum_{e=1}^n v_e\rho_e \leq V,\\
%   [2mm]\hspace{6.6mm}
%   \sum_{e=1}^n v_e\delta_e/|\Omega| = 0.5,\\
%   [2mm]\hspace{6.6mm}0<\rho_{\text{min}}\leq\rho\leq 1, \\
%   [2mm]\hspace{6.6mm}0 \leq \delta \leq 1
% \end{array}
% \right. 
\end{align}
Along with \eqref{eq:problem-energy} we study the problem
\begin{align}\label{eq:problem-energy-reg}
% \left\{
% \begin{array}{l}
  \min\limits_{\bb{\rho}\in U_{\text{ad}}}\max\limits_{\bb{\delta}\in
  \Delta_{\text{ad}}, \bb{u}\in \RR^m} \mathcal{J}_\epsilon(\bb{\rho},\bb{\delta},\bb{u}).
\end{align}
where 
\begin{equation}\label{eq:J-epsilon}
\mathcal{J}_\epsilon(\bb{\rho},\bb{\delta},\bb{u}) =2\bb{f}^\top
\bb{u}-\sum\limits_{e=1}^n\bb{u}^\top
\bb{K}_e(\bb{\rho},\bb{\delta}) \bb{u}-\frac{\epsilon}{2}\|\bb{\delta}\|^2,
\end{equation}
is a regularized variant of the objective function with $\epsilon>0$ being a small positive number causing the last term to act as
a Tikhonov type regularization. We are now able to state the following important result.
\begin{theorem}\label{thm:concavity-J-eps}
The functional $\mathcal{J}_\epsilon$ is concave w.\,r.\,t.
$(\bb{\delta},\bb{u})$ for $\epsilon=0$ and strictly concave for $\epsilon>0$.
\end{theorem}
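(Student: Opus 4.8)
The plan is to show concavity by examining the dependence of $\mathcal{J}_\epsilon$ on $(\bb{\delta},\bb{u})$ term by term. The objective decomposes into a linear part $2\bb{f}^\top\bb{u}$, the quadratic form $-\sum_e \bb{u}^\top\bb{K}_e(\bb{\rho},\bb{\delta})\bb{u}$, and the regularization $-\tfrac{\epsilon}{2}\|\bb{\delta}\|^2$. The linear term is trivially concave, and the Tikhonov term is clearly strictly concave in $\bb{\delta}$ (with Hessian $-\epsilon I$) and independent of $\bb{u}$. So the crux is to analyze the joint concavity of the middle term in $(\bb{\delta},\bb{u})$ simultaneously.

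First I would write out the local stiffness contribution explicitly. Combining \eqref{eq:discrete-stiff-assemble}, \eqref{eq:matdelta} and \eqref{eq:locmat}, the element stiffness is $\bb{K}_e(\bb{\rho},\bb{\delta}) = \tilde{\rho}_e^p\,g(\delta_e)\,\bb{M}_e$, where $g(\delta_e) = \left(\tfrac{1-\delta_e}{E_0}+\tfrac{\delta_e}{E_D}\right)^{-1}$ is the scalar interpolated modulus and $\bb{M}_e = \sum_{l}\bb{B}_{el}^\top\bb{C}\,\bb{B}_{el}\succeq 0$ is a fixed positive semidefinite matrix (the unit-material element stiffness). Hence the term to analyze is
\begin{equation*}
  -\sum_{e=1}^n \tilde{\rho}_e^p\,g(\delta_e)\,\bb{u}^\top\bb{M}_e\bb{u}.
\end{equation*}
The key structural observation is that $g$ is \emph{concave} in $\delta_e$: since $E_0>E_D>0$, the argument $\tfrac{1-\delta_e}{E_0}+\tfrac{\delta_e}{E_D}$ is an affine, strictly positive function of $\delta_e$, and the reciprocal of a positive affine function is convex, so $g=1/(\text{affine})$ is convex---wait, I must be careful here, and this sign check is exactly where I expect the main subtlety to lie.

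The hard part will be handling the coupling between $\bb{\delta}$ and $\bb{u}$ correctly, since neither $g(\delta_e)$ nor $\bb{u}^\top\bb{M}_e\bb{u}$ is individually linear, so concavity in the pair is not automatic from concavity in each variable separately. My plan is to verify that each function $\psi_e(\delta_e,\bb{u}) := g(\delta_e)\,\bb{u}^\top\bb{M}_e\bb{u}$ is \emph{jointly convex} in $(\delta_e,\bb{u})$; then $-\tilde{\rho}_e^p\psi_e$ is concave (as $\tilde{\rho}_e^p\ge 0$), the sum of concave functions is concave, and adding the concave terms $2\bb{f}^\top\bb{u}$ and $-\tfrac{\epsilon}{2}\|\bb{\delta}\|^2$ preserves concavity. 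To establish joint convexity of $\psi_e$, I would use that $\psi_e$ is a product of the nonnegative convex function $\bb{u}\mapsto\bb{u}^\top\bb{M}_e\bb{u}$ with the function $g(\delta_e)$; the clean route is to recognize $\psi_e$ as a matrix-fractional / perspective-type expression. Concretely, computing the $(1+m)\times(1+m)$ Hessian of $\psi_e$ in $(\delta_e,\bb{u})$ and checking positive semidefiniteness via its Schur complement reduces, after the routine differentiation, to the single scalar condition $g''(\delta_e)g(\delta_e)\ge 2\,g'(\delta_e)^2$ together with $g\ge 0$; I would verify this inequality directly from the explicit reciprocal-affine form of $g$.

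Finally, for the strict statement when $\epsilon>0$, I would argue that the additional term $-\tfrac{\epsilon}{2}\|\bb{\delta}\|^2$ contributes $-\epsilon I$ to the $\bb{\delta}$-block of the Hessian, making the quadratic form negative definite in the $\bb{\delta}$-directions while the stiffness term already controls the $\bb{u}$-directions through positive definiteness of $\bb{K}(\bb{\rho},\bb{\delta})$ (guaranteed since $\tilde{\rho}_e\ge\rho_{\min}>0$). Combining the strict negativity in $\bb{\delta}$ with strict negativity in $\bb{u}$ along the coupled directions yields strict concavity of $\mathcal{J}_\epsilon$ in the pair $(\bb{\delta},\bb{u})$; for $\epsilon=0$ only the non-strict version survives, consistent with the statement.
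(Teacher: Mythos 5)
Your proof is correct, but it is organized around a different key lemma than the paper's, so a comparison is worthwhile. The paper computes the joint Hessian of $\mathcal{J}_\epsilon$ in $(\bb{\delta},\bb{u})$ and verifies negative (semi-)definiteness by an explicit completion of squares: with $\bb{v}_e = x_e E(\delta_e)\bigl(\tfrac{1}{E_D}-\tfrac{1}{E_0}\bigr)\bb{u}$ the quadratic form collapses to $-\sum_e \tilde{\rho}_e^p E(\delta_e)\,(\bb{y}-\bb{v}_e)^\top\hat{\bb{K}}_e(\bb{y}-\bb{v}_e)-\epsilon\|\bb{x}\|^2$, which is $\le 0$, and $<0$ for $\epsilon>0$ once $\bb{K}\succ 0$ handles the directions with $\bb{x}=\bb{0}$. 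You instead isolate the per-element function $\psi_e(\delta_e,\bb{u})=\bb{u}^\top\bb{M}_e\bb{u}/h(\delta_e)$ with $h$ affine and positive, and prove its \emph{joint} convexity, either as a matrix-fractional (quadratic-over-affine) function composed with an affine map, or via the Schur complement, which indeed reduces to your scalar criterion $g g''\ge 2(g')^2$, $g\ge 0$: for $g=1/h$ one computes $gg''=2(h')^2/h^4=2(g')^2$, so the criterion holds \emph{with equality}, each $\psi_e$ is jointly convex, and the decomposition argument (nonnegative weights $\tilde{\rho}_e^p$, sums, linear term, Tikhonov term) finishes concavity; your case split $\bb{x}\neq\bb{0}$ versus $\bb{x}=\bb{0},\,\bb{y}\neq\bb{0}$ for strict concavity is exactly the one the paper also needs and is sound, provided $\bb{K}\succ 0$ — note this is a standing assumption of the paper coming from the boundary conditions, not a consequence of $\rho_{\min}>0$ alone. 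Algebraically the two arguments coincide (completing the square \emph{is} the Schur complement), but your framing buys a structural insight: the equality case $gg''=2(g')^2$ exhibits the inverse interpolation as precisely the borderline concave law, which matches the paper's separate observation in Section 2.4 that compliance is concave for RAMP parameters $q\le (E_0-E_D)/E_D$, with equality corresponding to the inverse model. What the paper's bare-hands computation buys is self-containedness: it needs no appeal to matrix-fractional convexity and no care with singular $\bb{M}_e$ in the Schur complement (which your route should handle, e.g., by substituting $\bb{M}_e^{1/2}$). Two small repairs to your write-up: resolve your mid-proof hesitation to ``$g$ is \emph{convex}'' (reciprocal of a positive affine function) — harmless, since your argument never uses separate convexity or concavity of $g$, only joint convexity of $\psi_e$; and keep your own Hessian entries with their factors of $2$ and minus signs, since they are the correct ones — the paper's displayed second derivatives contain sign and factor-of-two typos that cancel in its final quadratic-form computation.
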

\begin{proof}
For the convenience of the reader, we first introduce the local
stiffness matrices for a unit material tensor from (\ref{eq:unit-material-tensor}) as
\begin{equation}
\hat{\bb{K}}_e=\sum_{l=1}^{n_{ig}}\bb{B}_{el}^\top
\bb{C}\bb{B}_{el}.
\end{equation}
With $E(\delta) := \left(\frac{1-\delta}{E_0}+\frac{\delta}{E_D}\right)^{-1}$ we then have
\begin{equation}
\bb{K}=\sum_{e=1}^n\bb{K}_e=\sum_{e=1}^n
\tilde{\rho}_e^p E(\delta_e) \hat{\bb{K}}_e.
\end{equation}
and for $i,j=1,\ldots n$ it follows from straightforward calculations that
\begin{align}
	\frac{\partial^2}{\partial \delta_i \partial\delta_j}\mathcal{J}
	&=\begin{cases}
	\tilde{\rho}_i^p (E(\delta_i))^3
	\left( \frac{1}{E_D}-\frac{1}{E_0} \right)^2 \bb{u}^\top\hat{\bb{K}}_i\bb{u} &
	i=j,\\
	0 & \mbox{else,}
	\end{cases}\\
	\frac{\partial^2}{\partial \delta_i \partial \bb{u}}\mathcal{J}
	&=
	\tilde{\rho}_i^p (E(\delta_i))^2
	\left( \frac{1}{E_D}-\frac{1}{E_0} \right) \hat{\bb{K}}_i\bb{u},\\
	\frac{\partial^2}{ (\partial \bb{u})^2}\mathcal{J}&=\bb{K}.
\end{align}
Now we are able to show the (strict) concavity by demonstrating the negative (semi-)definiteness of the Hessian of $\mathcal{J}_\epsilon$:
\begin{equation}\label{eq:hessian-semi-definite}
\begin{aligned}
    \begin{pmatrix}\bb{x}\\\bb{y}\end{pmatrix}^\top&\frac{\partial^2\mathcal{J}_\epsilon}{
    (\partial ({\bb{\delta},\, \bb{u}}))^2}\begin{pmatrix}\bb{x}\\ \bb{y}\end{pmatrix}\\
    &= \bb{x}^\top \frac{\partial^2\mathcal{J}_\epsilon}{\left(\partial
    \bb{\delta}\right)^2} \bb{x} + 2
    \bb{y}^\top\frac{\partial^2\mathcal{J}_\epsilon}{\partial \bb{\delta} \partial
    \bb{u}}\bb{x} + \bb{y}^\top \frac{\partial^2\mathcal{J}_\epsilon}{ (\partial
    \bb{u})^2} \bb{y} \\
    &= -\sum_{e=1}^n x_e^2\tilde{\rho}_e^p (E(\delta_e))^3
    \left( \frac{1}{E_D}-\frac{1}{E_0} \right)^2 \bb{u}^\top\hat{\bb{K}}_e\bb{u} \\
	& \quad + 2\sum_{e=1}^n x_e
	\tilde{\rho}_e^p (E(\delta_e))^2
	\left( \frac{1}{E_D}-\frac{1}{E_0} \right)\bb{y}^\top \hat{\bb{K}}_e\bb{u} \\
	& \quad - \bb{y}^\top \bb{K} \bb{y} -\epsilon \|\bb{x}\|^2\\
	&=-\sum_{e=1}^n	\tilde{\rho}_e^p E(\delta_e)
	\left\{ \bb{v}^\top	\hat{\bb{K}}_e \bb{v}
	-2\bb{v}^\top \hat{\bb{K}}_e \bb{y}
	+ \bb{y}^\top \hat{\bb{K}}_e \bb{y}\right\} \\
	& \quad -\epsilon \|\bb{x}\|^2\\
	&=-\sum_{e=1}^n	\tilde{\rho}_e^p E(\delta_e) \left\{ 
	\left[\bb{y} - \bb{v} \right]^\top \hat{\bb{K}}_e \left[\bb{y} - \bb{v} \right]
	\right\} -\epsilon \|\bb{x}\|^2 \\
	& \begin{cases}
	\leq 0 & \epsilon=0\\
	<0 & \epsilon>0
	\end{cases}  \quad \forall
	\begin{pmatrix}\bb{x}\\\bb{y}\end{pmatrix}\in \RR^{n+m}
\end{aligned}
\end{equation}
where $\bb{v} = x_e E(\delta_e)\left( \frac{1}{E_D}-\frac{1}{E_0} \right) \bb{u}$. The latter inequality holds because of the local stiffness matrices $\hat{\bb{K}}_e $ being positive semi-definite,
$\tilde{\rho}_e^p E(\delta_e)
> 0$ and $\sum_{e=1}^n \tilde{\rho}_e^p E(\delta_e) \hat{\bb{K}}_e$ positive definite.
\end{proof}

A direct consequence of \thref{thm:concavity-J-eps} is that the inner maximization problem in \eqref{eq:problem-state} has -- for either of the choices of the uncertainty set outlined above -- a unique solution for any feasible choice of of the pseudo-density $\bb{\rho}$ provided that the regularization parameter $\epsilon$ is positive. Moreover the computation of this maximizer is numerically tractable.
In other words, the marginal function
\begin{equation}\label{eq:marginal} 
F_\epsilon(\bb{\rho}) := \max\limits_{\bb{\delta}\in
  \Delta_{\text{ad}}, \bb{u}\in \RR^m} \mathcal{J}_\epsilon(\bb{\rho},\bb{\delta},\bb{u})
\end{equation}
is well defined and can be efficiently evaluated. Moreover, as a maximum over infinitely many linear functions $F_{epsilon}$ is even continuous w.r.t. the design variable $\bb{\rho}$.

\subsection{Discussion of the inverse interpolation model} \label{sec:dis_interpol}
As outlined previously the inverse interpolation formula \eqref{eq:matdelta} has the strong advantage that the resulting adversarial problem 
\begin{align}\label{eq:advarsarial}
\max\limits_{\bb{\delta}\in
  \Delta_{\text{ad}}, \bb{u}\in \RR^m} \mathcal{J}_\epsilon(\bb{\rho},\bb{\delta},\bb{u})
\end{align}
is concave and can thus be solved to global optimality with a reasonable effort. On the other hand, from an application point of view using the standard interpolation model \eqref{eq:damage-model} appears to be much more natural.

Motivated by this, throughout this section, we would like to discuss the impact of using the interpolation model \eqref{eq:matdelta} as an approximation of the linear interpolation model \eqref{eq:damage-model} as well as a connection of the two models through a continuation scheme for the uncertainty variable $\bb{\delta}$.

We start with a closer look at formula \eqref{eq:matdelta}. As we have seen in the previous section, the properties of \eqref{eq:matdelta} were essential for the compliance to be concave in $\bb{\delta}$. In
fact, this was one of the goals Stolpe and Svanberg wanted to achieve with the suggestion of an alternative penalization model in topology
optimization, see \cite{stolpe2001alternative}. This interpolation scheme later became better known by the acronym RAMP for for Rational Approximation of Material Properties. Using the RAMP model to prescribe the effect of material uncertainty on the effective material stiffness in an $e$, we may write:
\begin{equation}\label{eq:ramp}
E^{\text{eff}}_e=E_e(\delta_e)=E_D+\frac{1-\delta_e}{1+q\delta_e}(E_0-E_D).
\end{equation}
Here, the parameter $q\geq 0$ serves is the RAMP penalization parameter. It follows directly from
\cite{stolpe2001alternative} that using this interpolation scheme in a linear elastic framework, the compliance is concave for all $q\leq (E_0-E_D)/E_D$. Moreover, it can be
easily checked that in the case $q = (E_0-E_D)/E_D$ the interpolation scheme \eqref{eq:ramp} 
coincides with our inverse interpolation scheme \eqref{eq:matdelta}, i.e. we can represent \eqref{eq:matdelta} in a RAMP fashion.
%\todog{RAMP Koeffizient und Formel unten checken}
%\begin{equation*}
%\begin{aligned}
%E(\delta)&=E_D+\frac{1-\delta}{1+\left((E_0-E_D)/E_D\right)\delta}(E_0-E_D)\\
%&=\left(\frac{{1-\delta}}{E_0}+\frac{{\delta}}{E_D}\right)^{-1}
%\end{aligned}
%\end{equation*}
\begin{figure}[ht]
\centering
\begin{tikzpicture}[scale=.8]
  \node at (-.6,-.2)
  {\includegraphics[height=.75\columnwidth]{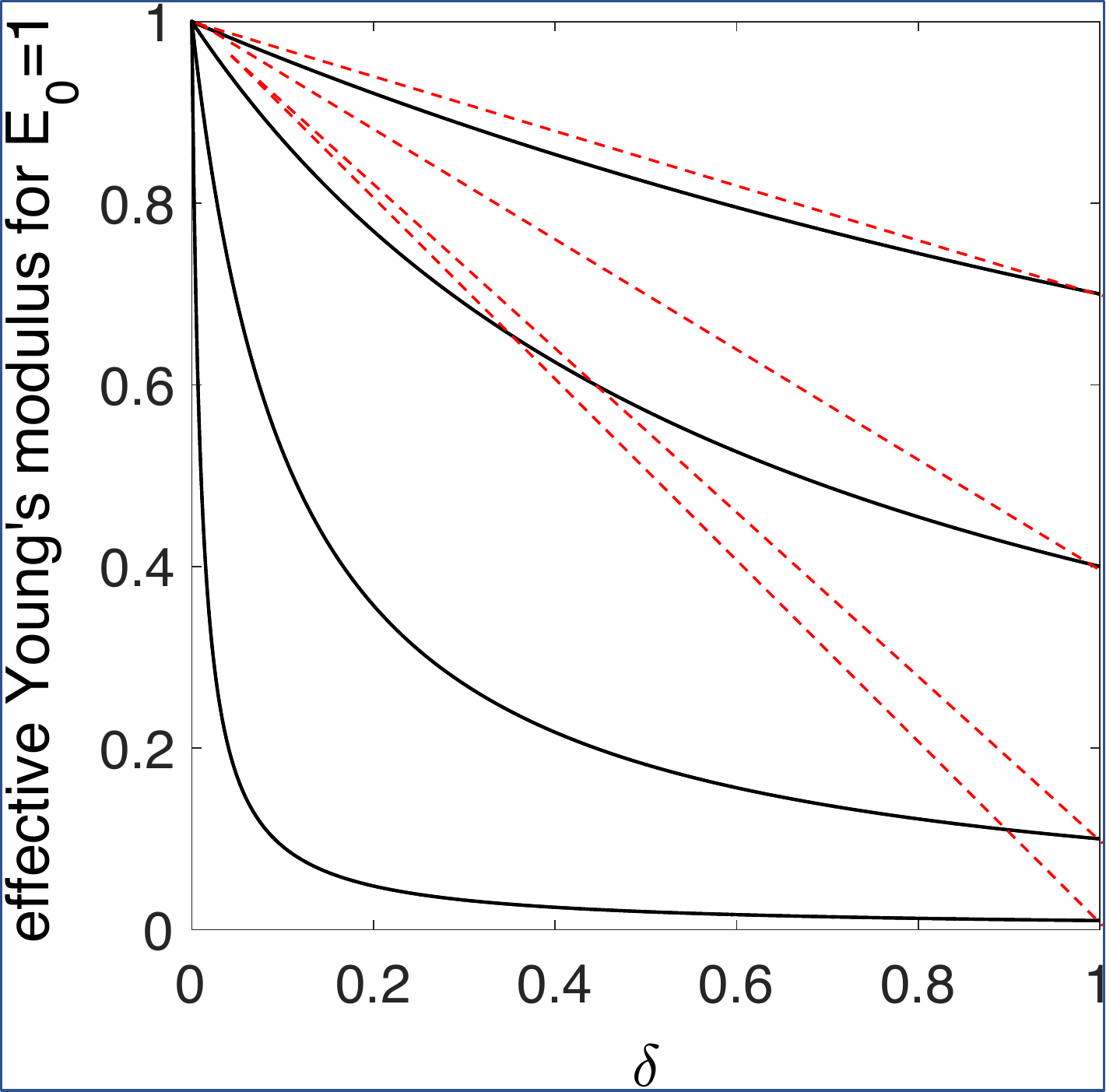}};
  \node[text width=2.5cm] at (5.0,1.6) {$E_D=0.7$};
  \node[text width=2.5cm] at (5.0,-.4) {$E_D=0.4$};
  \node[text width=2.5cm] at (5.0,-2.3) {$E_D=0.1$};
  \node[text width=2.5cm] at (5.0,-2.9) {$E_D=0.01$};
\end{tikzpicture}
 \caption{Visualization of the direct and the inverse interpolation model for different material parameters $E_0, E_D$
 \label{fig:visualmat}}
\end{figure}
Examples of the effective material stiffness computed by \eqref{eq:matdelta} for  different
values of ${\delta}$ and $E_D$ are shown in \figref{fig:visualmat}. %Here, we
%can see the effect of the interpolation effectively being the RAMP penalization
%scheme. 
It is seen that for an increasing difference between $E_0$ and $E_D$, the penalization
parameter $q=(E_0-E_D)/E_D$ leads to increasing penalization of intermediate values strictly between 0 and 1. However, as the compliance is
maximized w.r.t. $\bb{\delta}$ (see \eqref{eq:advarsarial}), this means rewarding intermediate values if a linear constraint is used to bound the total material uncertainty as in \eqref{eq:delta-ad} or \eqref{eq:delta-ad-rho}. 

Now, in order to compare the effect of the linear and the inverse interpolation scheme using the same linear total uncertainty bound, we can sketch the effective material properties obtained in both cases. \figref{fig:external-approx} demonstrates this for a two element setup with a total uncertainty bound $\delta_1+\delta_2 \leq 1$ and material data $E_0=1$ and $E_D=0.7$. 
\begin{figure}[!ht]
\centering
\subfigure[admissible uncertainties]
 {\includegraphics[width=.495\columnwidth]{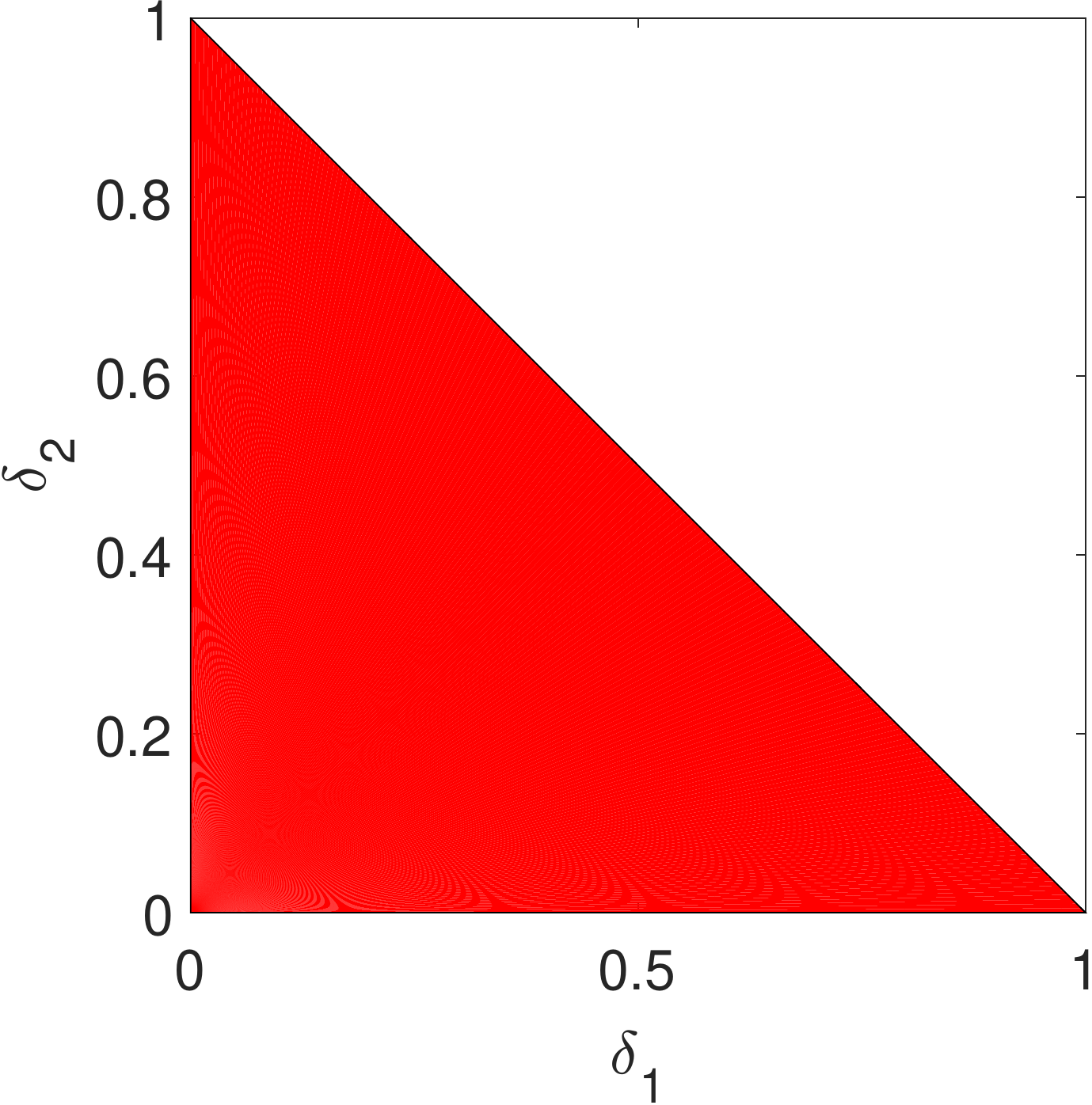}}\hfill
 \subfigure[admissible Young's moduli (linear model)]
 {\includegraphics[width=.495\columnwidth]{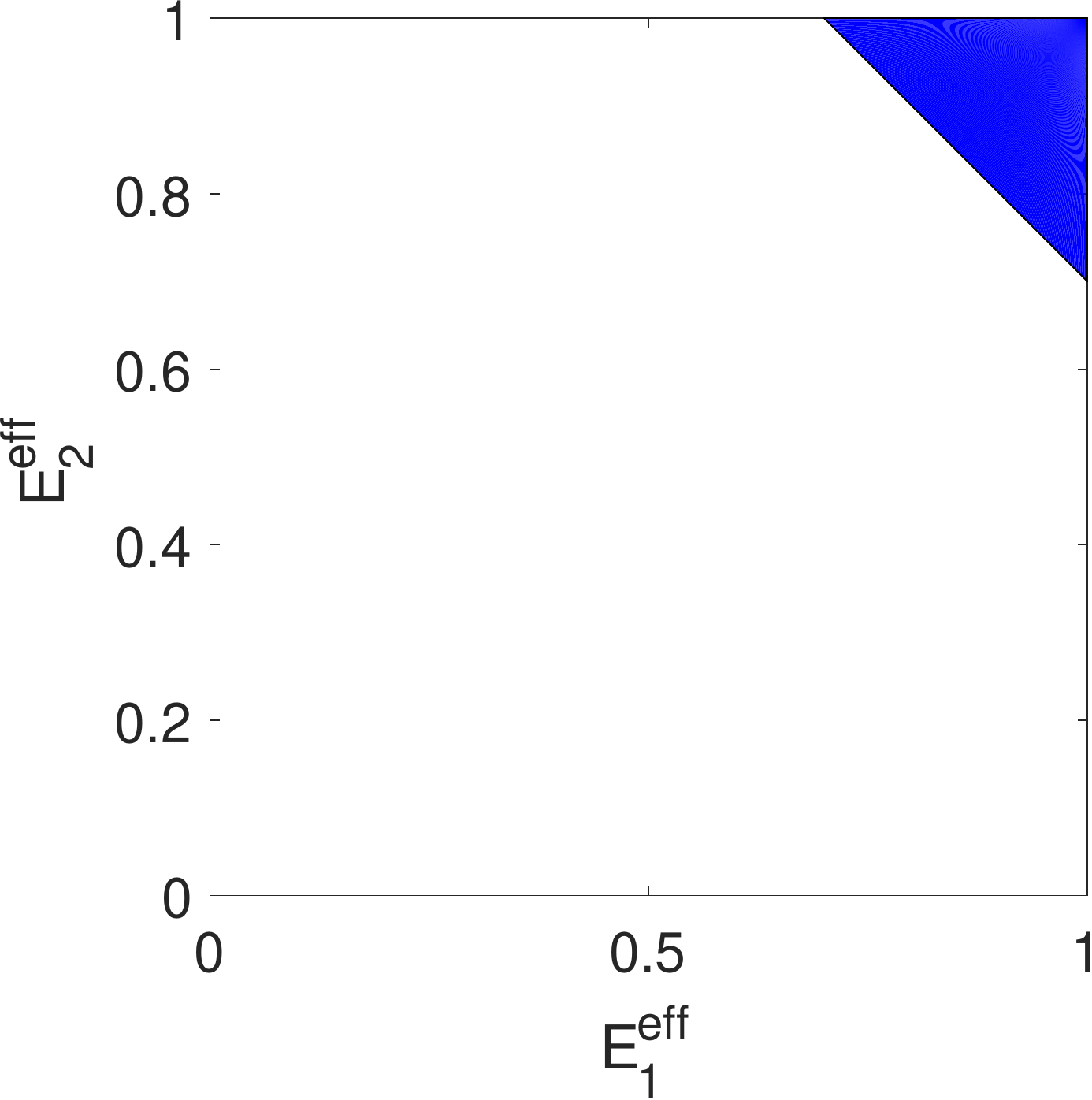}}
\subfigure[admissible Young's moduli (inverse model)]
 {\includegraphics[width=.495\columnwidth]{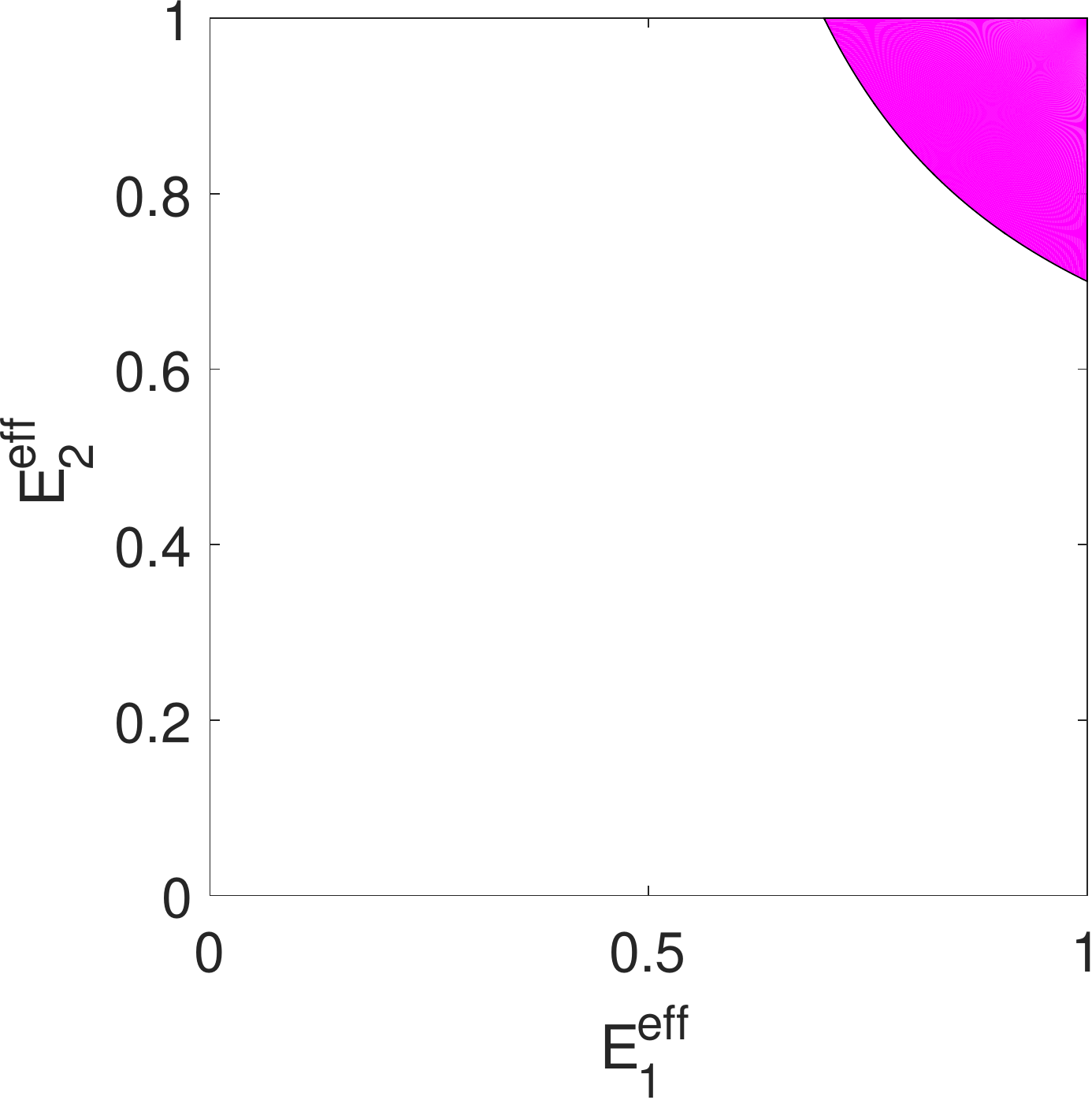}}\hfill
\subfigure[comparison of effective uncertainty sets]
 {\includegraphics[width=.495\columnwidth]{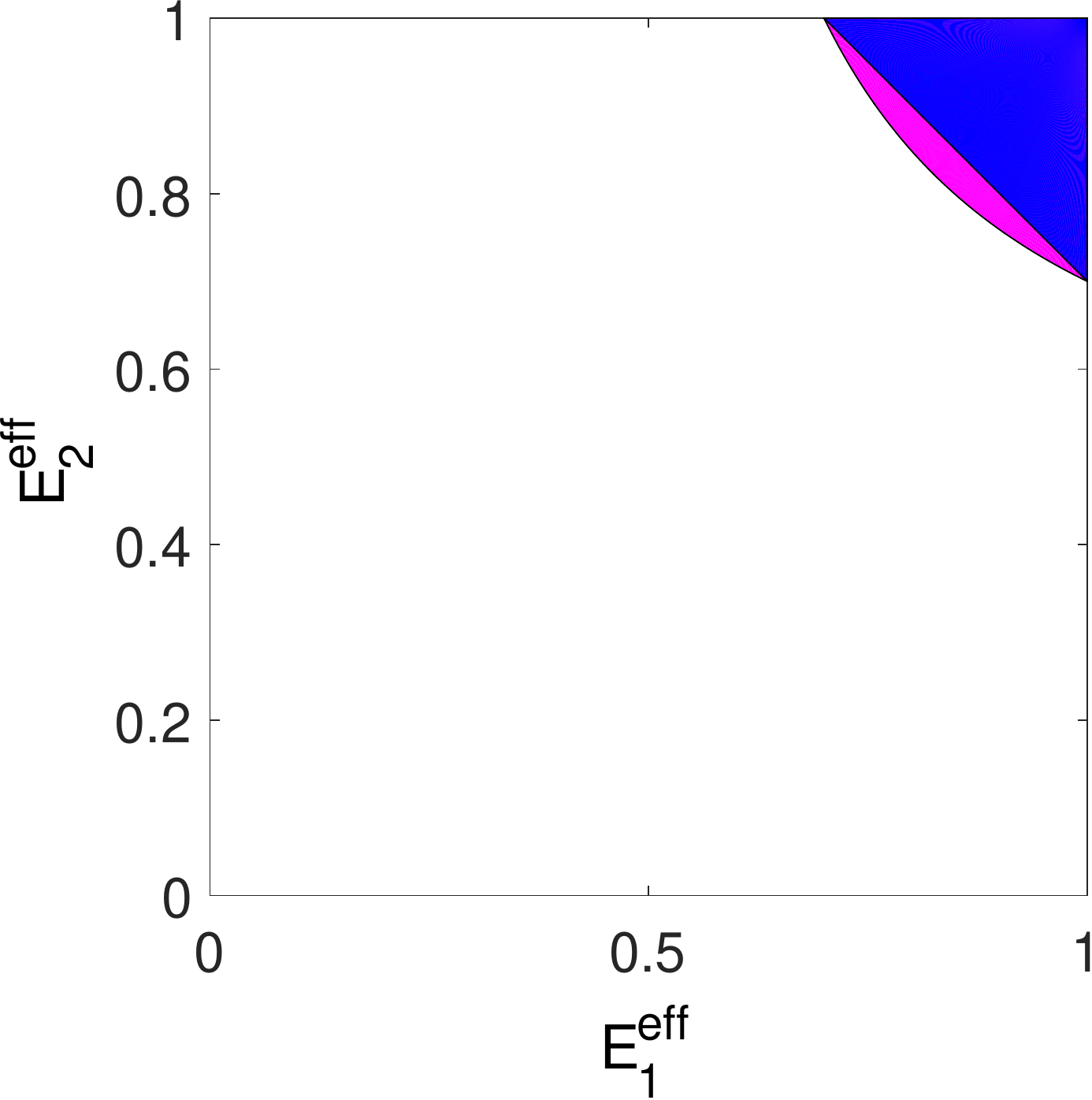}}
 \caption{Comparison of the linear and inverse stiffness interpolation models
 for a domain with two elements and the total uncertainty constraint
 $\delta_1+\delta_2\leq 1$.
 \label{fig:external-approx}}
\end{figure}
It is clearly seen that the admissible set obtained using the inverse interpolation model (magenta set) is a relatively tight outer approximation of the admissible set obtained using the linear interpolation model (blue set) for this choice of material parameters (see \figref{fig:external-approx}d). This is true in general, if the material deviations characterized by the difference $(E_0-E_D)/E_0$ are sufficiently close to 1. If this is the case, applying model \eqref{eq:matdelta} as an outer approximation of model \eqref{eq:damage-model} in the framework of the robust optimization model \eqref{eq:problem-state} will lead to only slightly more conservative topological solutions. This appears to be a small price, taking the numerical tractability into account. It is noted that $(E_0-E_D)/E_0$ is comparably large all additive manufacturing scenarios discussed in section 5.

On the other hand \figref{fig:inner-outer-approx} reveals that for smaller and smaller $E_D$, it becomes more and more likely that the worst-case effective material (i.e. the maximizer of the compliance function), is close to the black spots highlighted in \figref{fig:inner-outer-approx}.
If this happens, the solution of the adversarial problem \eqref{eq:advarsarial} can be quite far away from the original uncertainty sets (blue sets in \figref{fig:inner-outer-approx}) and thus lead to an over-conservative solution. As a consequence, from an application point of view it might be more attractive to work directly with the linear interpolation model. As the latter can not be expected to be solved to global optimality for a meaningful size of the problem, we suggest the following heuristic based on the RAMP interpretation of model \eqref{eq:matdelta}: we start with RAMP parameter $q = (E_0-E_D)/E_D$ --  corresponding precisely to the concave variant of the adversarial problem -- and from there slowly decrease the RAMP parameter towards 0, corresponding precisely to the linear interpolation rule. It should be stressed that there is no guarantee that this procedure leads to the global maximizer of \eqref{eq:advarsarial} with linear interpolation model, however it provides a lower bound for the worst case compliance of a given topology. As the inverse model at the same time provides a rigorous upper bound for the same, at least this information can be used to estimate how conservative a computed robust solution is.
Moreover numerical experiments in section 5.3 will reveal that much more reasonable results are obtained compared to a direct application of the uncertainty model \eqref{eq:damage-model}. 

We finally note that one may also think about inner approximations (green sets in \figref{fig:inner-outer-approx}) instead of outer approximations, but then the true worst-case might be missed, when solving the adversarial problem and thus the term worst-case looses its meaning. This is why we did not further pursue this direction in this paper.

\begin{figure}[!ht]
\centering
\subfigure[$E_D=0.7$]
 {\includegraphics[width=.47\columnwidth]{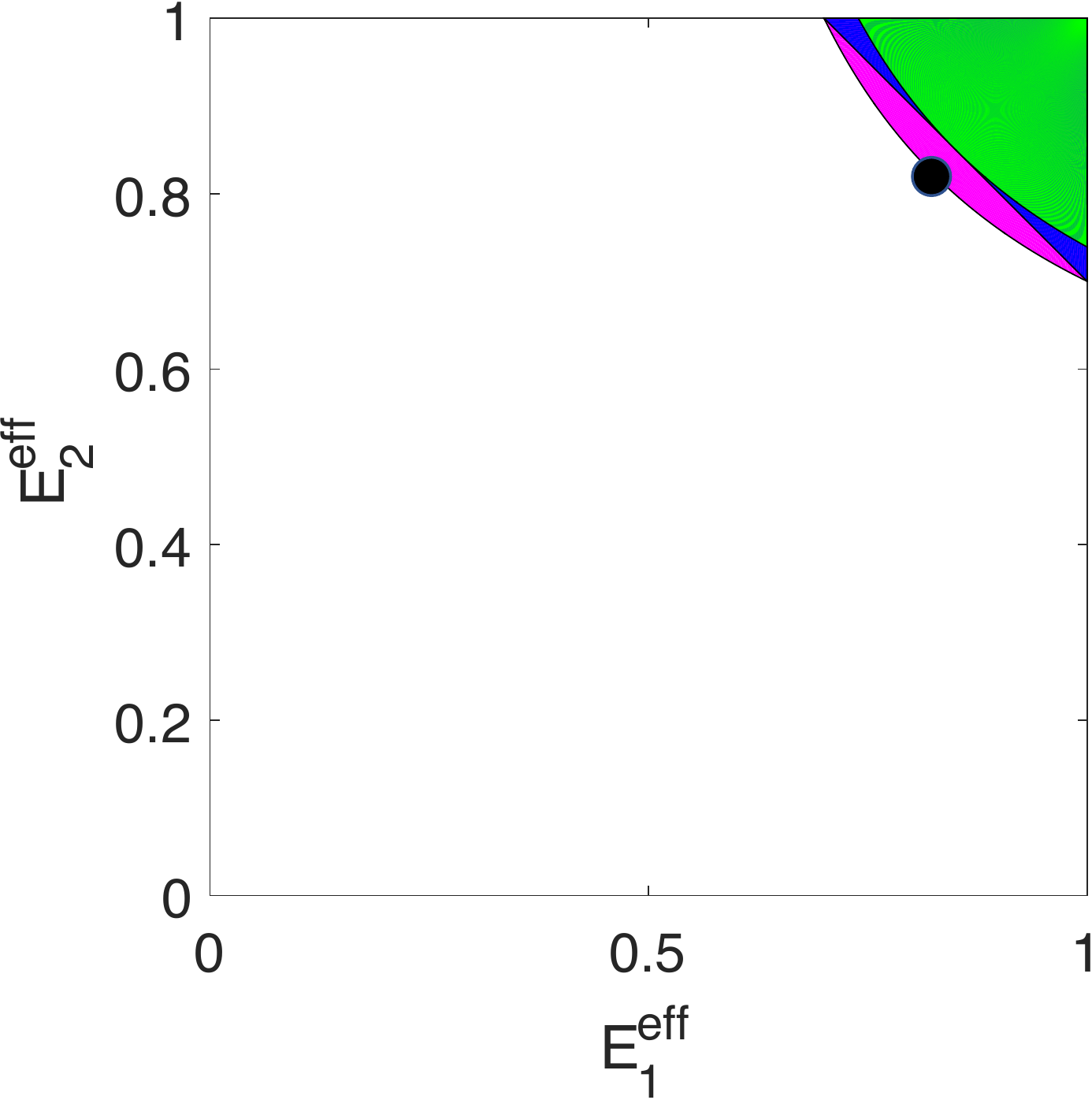}}\hfill
 \subfigure[$E_D=0.5$]
 {\includegraphics[width=.47\columnwidth]{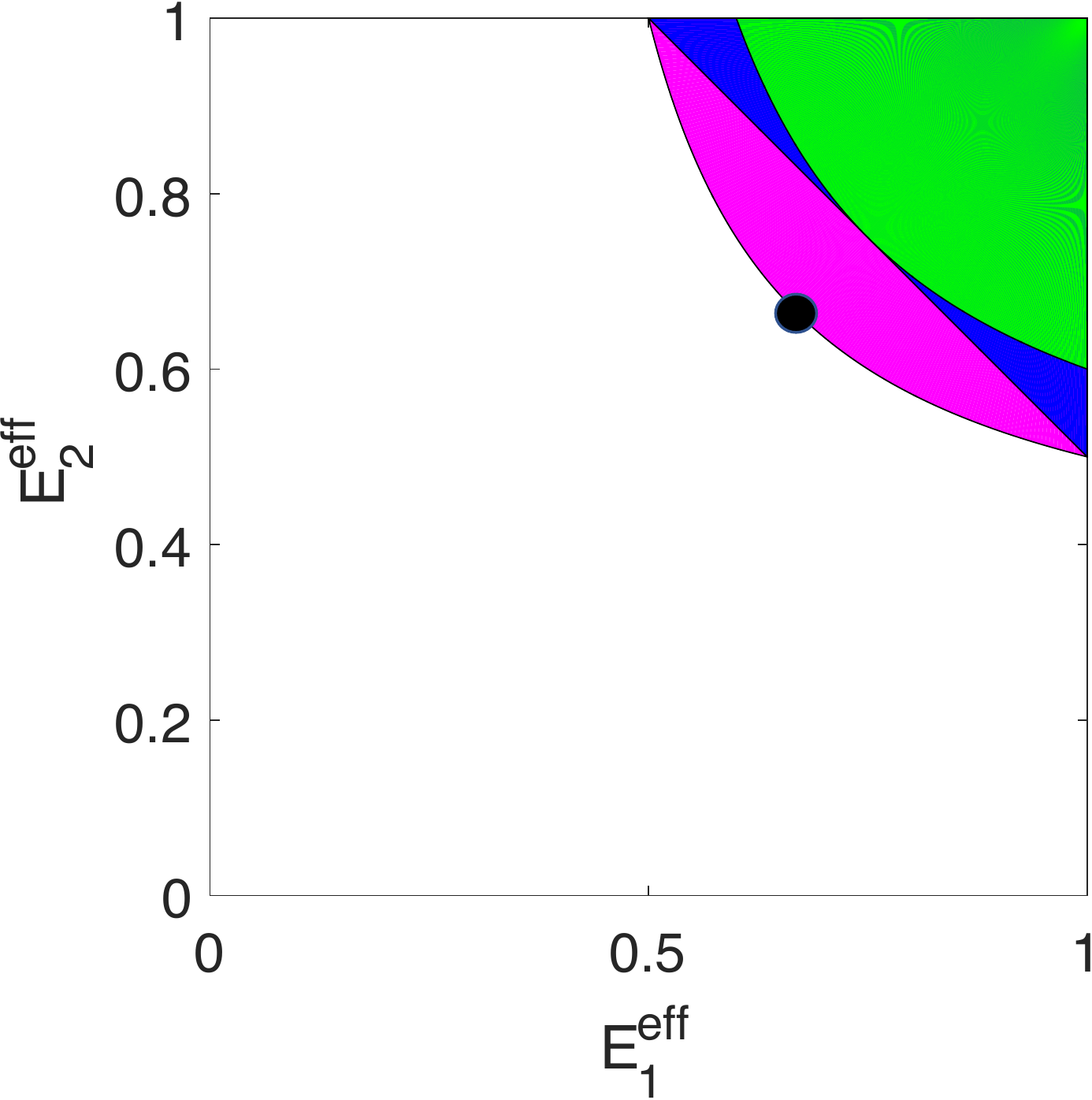}}
\subfigure[$E_D=0.2$]
 {\includegraphics[width=.47\columnwidth]{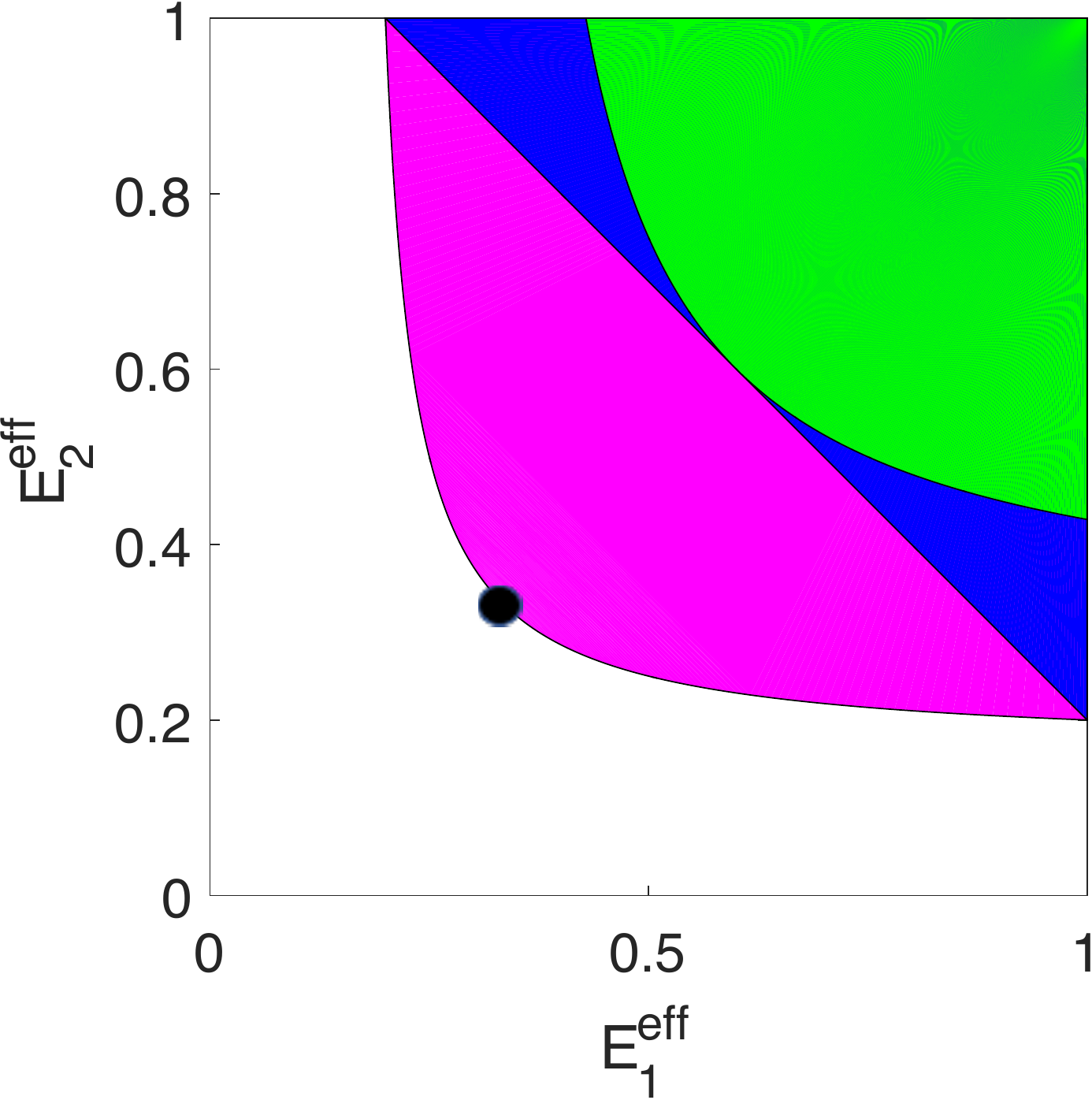}}\hfill
\subfigure[$E_D=0.01$]
 {\includegraphics[width=.47\columnwidth]{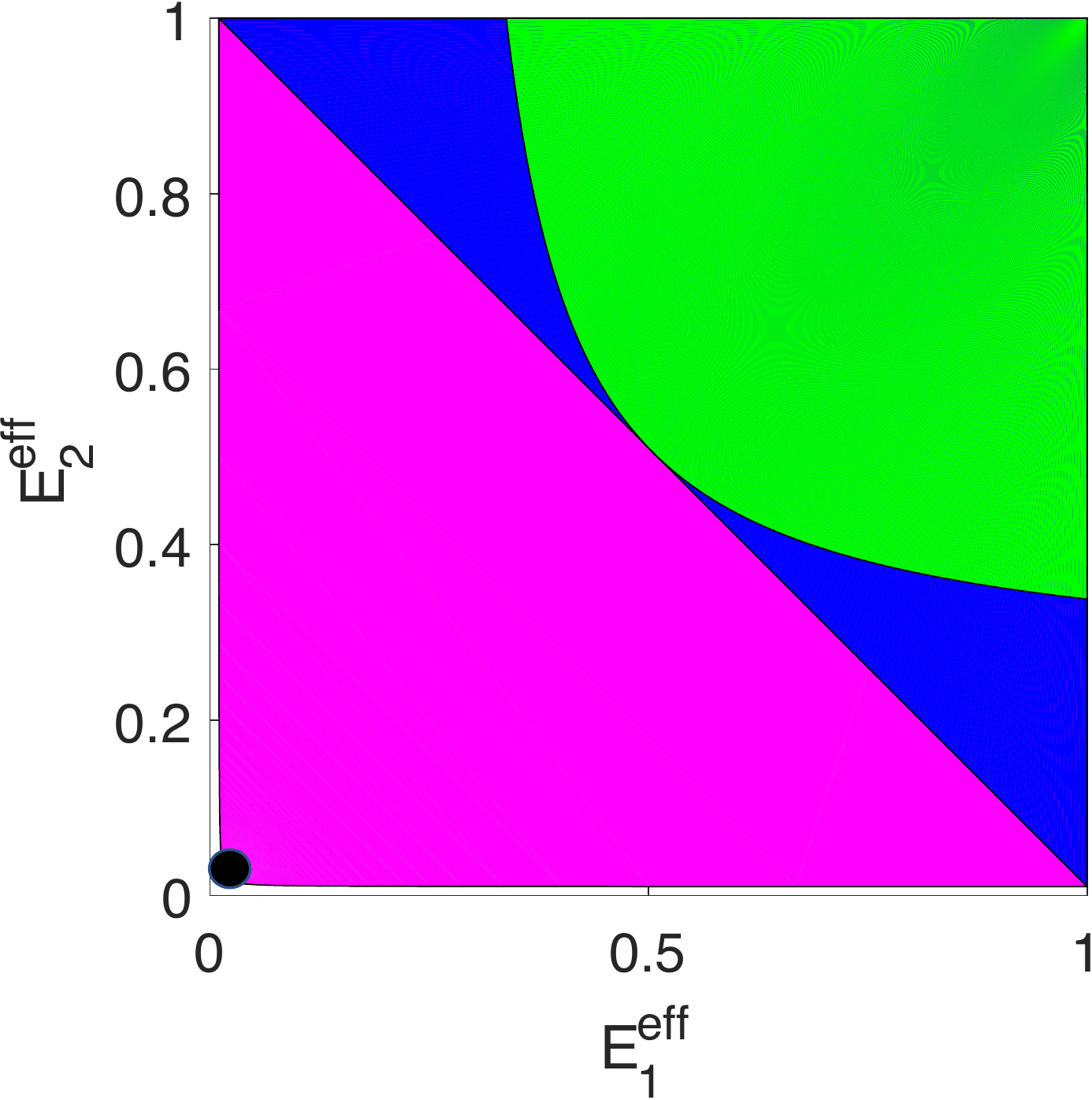}}
 \caption{Comparison of the linear and inverse stiffness interpolation models
 for a domain with two elements and a total uncertainty constraint of
 $\delta_1+\delta_2\leq1$. The images show the admissible effective materials for the linear model
 (blue) and an outer (magenta) and inner (green) approximation using the inverse model. Note the small distance of the highlighted points in the magenta set to the origin for small values of $E_D$.
 \label{fig:inner-outer-approx}} 
\end{figure}
\section{Obtaining derivative information}
In \secref{sec:optproblem} we have seen that the marginal function $F_\epsilon(\bb{\rho})$ is well-defined and continuous. Taking additionally the compactness of the set $U_{\text{ad}}$ (see \eqref{eq:U-ad}) into account it is clear that the problem 
\begin{align}\label{eq:optval_func}
  \min\limits_{\bb{\rho}\in U_{\text{ad}}}F_\epsilon(\bb{\rho}),
\end{align}
which is fully equivalent to the regularized problem \eqref{eq:problem-energy-reg}, has a solution. As the objective $F_\epsilon$ depends only on $\bb{\rho}$ in an explicit manner, we may consider using standard, gradient-based optimzation solvers for the numerical solution of \eqref{eq:optval_func}. This is our motivation to analyse differentiability properties of $F_\epsilon$ in the sequel.

\subsection{The $\epsilon$-regularized minimax approach}\label{sec:minimax}
In order to investigate the differentiability properties of $F_\epsilon$ we make use of results from literature formulated for marginal functions of the type
\begin{equation}\label{eq:general_minmax}
\psi(\bb{x})=\max\limits_{\bb{y}\in Y(\bb{x})}\mathpzc{j}(\bb{x},\bb{y}),
\end{equation}
where $\mathpzc{j}:\RR^l\times\RR^k\to \bb{\RR}$ and $Y(\cdot)$ is a
point-to-set map from $\RR^k$ to $\RR^l$. Moreover we assume that the set $Y(x)$ (which is in the context of our paper the uncertainty set) is given in the following form: 
\begin{equation*}
Y(\bb{x})= \left\{ \bb{y}\in Y : G(\bb{x},\bb{y})\leq \bb{0},  H(\bb{x},\bb{y}) = \bb{0} \right\},
\end{equation*}
where $G:\RR^l\times\RR^k\to\RR^s, (x,y) \mapsto (g_1(x,y), \ldots, g_k(x,y))$ and $H:\RR^l\times\RR^k\to\RR^q, (x,y) \mapsto (h_1(x,y), \ldots, h_q(x,y))$ are continuously differentiable in both variables. 
%The maximal-value function thus becomes
%\begin{align*}
%\psi(\bb{x})=&\max_{\bb{y}\in Y}\mathpzc{j}(\bb{x},\bb{y})\\
%&\mbox{ s.t.: }G(\bb{x},\bb{y})\leq\bb{0}.
%\end{align*}

Next we state two well known constraint qualifications for the maximization problem in \eqref{eq:general_minmax}.
\begin{definition}[Mangasarian-Fromowitz constraint
qualification]\label{def:mfcq} 
A feasible point $\bb{\bar{y}}\in Y(\bb{\bar{x}})$ is said to satisfy the
\textit{Mangasarian-Fromowitz constraint qualification} (MFCQ) if the following holds:
\begin{enumerate}
  \item There exists a direction $\bb{d}\in\RR^k$ such that $\nabla_{\bb{y}}H(\bb{\bar{x}},\bb{\bar{y}})^\top\bb{d}=\bb{0}$ and $\nabla_{\bb{y}}g_i(\bb{\bar{x}},\bb{\bar{y}})^\top \bb{d}<0$ for all $i\in
      I(\bb{\bar{x}},\bb{\bar{y}})=\left\{i\in\{1,\ldots,s\} \mid  g_i(\bb{\bar{x}},\bb{\bar{y}}) = 0\right\}$.
  \item The gradients of the equality constraints $\nabla_{\bb{y}}
  h_j,j=1,\ldots,q$ are linearly independent at $(\bb{\bar{x}},\bb{\bar{y}})$.
\end{enumerate}
\end{definition}
\begin{definition}[Linear independence constraint qualification]\label{def:licq}
A feasible point $\bb{\bar{y}}\in Y(\bb{\bar{x}})$ is said to satisfy the
\textit{linear independence constraint qualification} (LICQ), if the system of gradients
$\nabla_{\bb{y}}h_i(\bb{\bar{x}},\bb{\bar{y}}), \ i=1,\ldots,q$,
$\nabla_{\bb{y}}g_j(\bb{\bar{x}},\bb{\bar{y}}), \ j\in I(\bb{\bar{x}},\bb{\bar{y}})$ is linearly independent.
\end{definition}
Note that the LICQ is stronger and in fact implies MFCQ. Furthermore it is well known that the LICQ immediately guarantees the uniqueness of Lagrangian multipliers associated with the constraints $G(\bb{x},\bb{y})\leq \bb{0},  H(\bb{x},\bb{y}) = \bb{0}$.
Moreover we need the following definition.

\begin{definition}
The set $Y(\bar{x})$ is called \textit{uniformly compact} near $\bb{\bar{x}}$ if there is a
neighborhood $N(\bb{\bar{x}})$ of $\bb{\bar{x}}$, such that the closure of
$\bigcup_{\bb{x}\in N(\bb{\bar{x}})}Y(\bb{x})$ is compact.
\end{definition}

After these preliminaries, we can state the following result taken from
\cite{gauvin1982differential}:
\begin{theorem}\label{thm:subdifferential}
If $Y(\bb{\bar{x}})$ is nonempty, $Y(\bb{\bar{x}})$ is uniformly compact near $\bb{\bar{x}}$ and the MFCQ is satisfied at every point $\bb{\bar{y}}\in
M(\bb{\bar{x}})$, then the following inclusion holds true for the generalized gradient in the sense of Clarke:
\begin{align}\label{eq:subdifferential-inclusion}
\partial\psi(\bb{\bar{x}})\subset & \operatorname{conv} \big\{
\bigcup_{\bb{\bar{y}}\in M(\bb{\bar{x}})} \bigcup_{(\bb{\lambda},\bb{\mu})\in
K(\bb{\bar{x}},\bb{\bar{y}})}\nabla_{\bb{x}}\mathpzc{j}(\bb{\bar{x}},\bb{\bar{y}})\nonumber \\ 
& -\bb{\lambda}^\top\nabla_{\bb{x}}G(\bb{\bar{x}},\bb{\bar{y}})
-\bb{\mu}^\top\nabla_{\bb{x}}H(\bb{\bar{x}},\bb{\bar{y}}) \big\}.
\end{align}
Here $M(\bb{\bar{x}})$ is the set of maximizers for a given $\bb{\bar{x}}$, i.~e.
\begin{equation*}
M(\bb{\bar{x}})= \left\{ \bb{y}\in Y(\bb{\bar{x}}) : 
\psi(\bb{\bar{x}}) = \mathpzc{j}(\bb{\bar{x}},\bb{y}) \right\}
\end{equation*}
and $K(\bb{\bar{x}},\bb{\bar{y}})$ is the set of optimal Lagrange multipliers
\begin{equation*}
\begin{aligned}
K(\bb{\bar{x}},\bb{\bar{y}})= \bigg\{ &(\bb{\lambda},\bb{\mu})
\in\RR^s\times\RR^q :\\
&\nabla_{\bb{y}}\mathpzc{j}(\bb{\bar{x}},\bb{\bar{y}})
-\bb{\lambda}^\top\nabla_{\bb{y}}G(\bb{\bar{x}},\bb{\bar{y}})
-\bb{\mu}^\top\nabla_{\bb{y}}H(\bb{\bar{x}},\bb{\bar{y}})=\bb{0},\\
&\lambda_i g_i(\bb{\bar{x}},\bb{\bar{y}})=0,\ i=1,\ldots,s,
\bb{\lambda}\geq \bb{0}
\bigg\}.
\end{aligned}
\end{equation*}
If, in addition, the LICQ holds at every point $\bb{\bar{y}}\in
M(\bb{\bar{x}})$
\begin{equation}\label{eq:subdifferential}
\begin{aligned}
\partial\psi(\bb{\bar{x}})= \operatorname{conv} \bigg\{
\bigcup_{\bb{\bar{y}}\in M(\bb{\bar{x}})}
&\nabla_{\bb{x}}\mathpzc{j}(\bb{\bar{x}},\bb{\bar{y}})
-\bb{\bar{\lambda}}^\top\nabla_{\bb{x}}G(\bb{\bar{x}},\bb{\bar{y}})\\
&-\bb{\bar{\mu}}^\top\nabla_{\bb{x}}H(\bb{\bar{x}},\bb{\bar{y}}) \bigg\}.
\end{aligned}
\end{equation}
where $(\bb{\bar{\lambda}},\bb{\bar{\mu}})$ are the unique multipliers from
$K(\bb{\bar{x}},\bb{\bar{y}})$.
\end{theorem}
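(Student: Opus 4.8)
The plan is to bring the result into the standard framework of Clarke's subdifferential calculus by verifying three ingredients and matching an upper and a lower estimate of the generalized directional derivative. Concretely, I would first show that $\psi$ is locally Lipschitz near $\bb{\bar{x}}$, then derive an upper bound for the Clarke directional derivative $\psi^{\circ}(\bb{\bar{x}};\bb{d})$ in terms of the Lagrangian gradients appearing on the right-hand side of \eqref{eq:subdifferential-inclusion}, and finally, under the additional LICQ, produce a matching lower bound so that the inclusion sharpens to an equality.

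For the Lipschitz property I would exploit the uniform compactness hypothesis: there is a neighbourhood $N(\bb{\bar{x}})$ and a fixed compact set containing $Y(\bb{x})$ for all $\bb{x}\in N(\bb{\bar{x}})$, so that all maximizers remain in a common compact set and $\psi$ stays finite near $\bb{\bar{x}}$. Combining this with the MFCQ, which (being Robinson's constraint qualification for the combined system $G(\bb{x},\bb{y})\le\bb{0}$, $H(\bb{x},\bb{y})=\bb{0}$) yields local metric regularity and hence local Lipschitz behaviour of the set-valued map $Y(\cdot)$ in the Hausdorff sense, and with the continuous differentiability of $\mathpzc{j}$, a routine estimate shows $\psi$ to be locally Lipschitz. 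This legitimises the use of the Clarke generalized gradient and of the identity $\psi^{\circ}(\bb{\bar{x}};\bb{d})=\max\{\bb{\xi}^\top\bb{d}:\bb{\xi}\in\partial\psi(\bb{\bar{x}})\}$ throughout.

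The heart of the argument is the upper estimate. Fixing a direction $\bb{d}$, a Danskin-type reduction restricts attention to the maximizer set $M(\bb{\bar{x}})$, and for each $\bb{\bar{y}}\in M(\bb{\bar{x}})$ a first-order sensitivity analysis of the parametric program — via its KKT system, with the multiplier set $K(\bb{\bar{x}},\bb{\bar{y}})$ kept bounded by the MFCQ — bounds the one-sided variation of the optimal value by the $\bb{d}$-derivative of the Lagrangian $\mathpzc{j}(\bb{x},\bb{y})-\bb{\lambda}^\top G(\bb{x},\bb{y})-\bb{\mu}^\top H(\bb{x},\bb{y})$, maximized over $(\bb{\lambda},\bb{\mu})\in K(\bb{\bar{x}},\bb{\bar{y}})$. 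Taking the supremum over $\bb{\bar{y}}\in M(\bb{\bar{x}})$ then shows that $\psi^{\circ}(\bb{\bar{x}};\bb{d})$ is dominated by the support function of the convex set on the right-hand side of \eqref{eq:subdifferential-inclusion}; since that set is convex and compact under the hypotheses, support-function dominance gives the claimed inclusion. I expect the main obstacle to lie precisely here: the sensitivity estimate and the multiplier bounds must hold \emph{uniformly} over the whole maximizer set $M(\bb{\bar{x}})$, which need not be a singleton. Controlling this requires the boundedness of $\bigcup_{\bb{\bar{y}}\in M(\bb{\bar{x}})}K(\bb{\bar{x}},\bb{\bar{y}})$ guaranteed by MFCQ together with the compactness of $M(\bb{\bar{x}})$ inherited from uniform compactness.

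For the equality under LICQ I would first note that LICQ forces each multiplier set $K(\bb{\bar{x}},\bb{\bar{y}})$ to reduce to the singleton $\{(\bb{\bar{\lambda}},\bb{\bar{\mu}})\}$, so the inner union in \eqref{eq:subdifferential-inclusion} collapses and the right-hand side becomes exactly the set in \eqref{eq:subdifferential}. It then remains to establish the reverse directional estimate. Here LICQ is indispensable: linear independence of the active constraint gradients lets me invoke the implicit function theorem to construct, for each $\bb{\bar{y}}\in M(\bb{\bar{x}})$, a feasible path $\bb{y}(t)\in Y(\bb{\bar{x}}+t\bb{d})$ with $\bb{y}(0)=\bb{\bar{y}}$ whose first-order behaviour realises the Lagrangian sensitivity. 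Evaluating $\mathpzc{j}(\bb{\bar{x}}+t\bb{d},\bb{y}(t))\le\psi(\bb{\bar{x}}+t\bb{d})$ and differentiating at $t=0$ supplies a lower bound matching the upper bound from the previous step, so that $\psi$ is directionally differentiable and Clarke regular with $\psi^{\circ}(\bb{\bar{x}};\bb{d})$ equal to the support function of the set in \eqref{eq:subdifferential}; equality of the generalized gradient with that set follows.
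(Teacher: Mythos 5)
First, an important point of reference: the paper does \emph{not} prove this theorem at all --- it is imported verbatim as a known result (``taken from \cite{gauvin1982differential}''), so your attempt can only be measured against the standard literature proof of Gauvin--Dubeau, which your outline in fact mirrors: local Lipschitz continuity of $\psi$ from uniform compactness plus stability of MFCQ, an upper estimate of the generalized directional derivative by Lagrangian sensitivity over $M(\bb{\bar{x}})$ and $K(\bb{\bar{x}},\bb{\bar{y}})$, and, under LICQ, collapse of the multiplier sets to singletons together with a feasible-path construction via the implicit function theorem giving the matching lower estimate, Clarke regularity, and hence equality. The Lipschitz step and the LICQ step are essentially sound as sketched (one small point in the path construction: hold \emph{all} active inequality constraints at equality along the path, which LICQ permits; the weakly active ones have zero multipliers and so do not disturb the derivative computation, while the inactive ones stay feasible by continuity).

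The genuine gap is in your upper estimate. You bound ``the one-sided variation of the optimal value'', i.e.\ difference quotients of the form $\left(\psi(\bb{\bar{x}}+t\bb{d})-\psi(\bb{\bar{x}})\right)/t$, which only controls the Dini upper directional derivative at $\bb{\bar{x}}$. Clarke's generalized directional derivative $\psi^{\circ}(\bb{\bar{x}};\bb{d})$ is a limsup of $\left(\psi(\bb{x}+t\bb{d})-\psi(\bb{x})\right)/t$ over \emph{moving base points} $\bb{x}\to\bb{\bar{x}}$, and for merely Lipschitz functions the two can differ drastically: take $\psi(x)=\max\{y : y\le x,\ y\le -x,\ |y|\le 10\}=-|x|$, a marginal function satisfying all hypotheses of the theorem (MFCQ holds at the maximizer, with multipliers non-unique); its Dini derivative at $0$ in direction $1$ equals $-1$, while $\psi^{\circ}(0;1)=+1$. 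Consequently, dominance of the Dini derivative by the support function of the right-hand side set does not imply the inclusion $\partial\psi(\bb{\bar{x}})\subset\operatorname{conv}\{\dots\}$, because the support function of $\partial\psi(\bb{\bar{x}})$ is $\psi^{\circ}(\bb{\bar{x}};\cdot)$, not the Dini derivative. The standard repair --- and the route of the cited source --- is either to carry out the sensitivity estimate locally uniformly for base points $\bb{x}$ near $\bb{\bar{x}}$, exploiting outer semicontinuity of $\bb{x}\mapsto M(\bb{x})$ (from uniform compactness) and of the multiplier map (from MFCQ, which keeps multipliers locally uniformly bounded), or to invoke Rademacher's theorem: write $\partial\psi(\bb{\bar{x}})=\operatorname{conv}\{\lim_k\nabla\psi(\bb{x}_k) : \bb{x}_k\to\bb{\bar{x}},\ \psi \mbox{ differentiable at } \bb{x}_k\}$, pin down $\nabla\psi(\bb{x}_k)$ at differentiability points by the two-sided Dini estimates for the problem at $\bb{x}_k$, and pass to the limit using the same outer semicontinuity. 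With that modification your outline becomes a complete proof; without it, the central inclusion \eqref{eq:subdifferential-inclusion} is not established.
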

Finally, the following theorem is taken from \cite{clarke1975generalized}:
\begin{theorem}\label{thm:subdifferential-singleton}
The following are equivalent:
\begin{enumerate}
  \item $\partial \psi(\bb{x})= \left\{ \bb{\zeta} \right\}  $, i.\,e. the
  generalized gradient reduces to a singleton.
  \item $\nabla \psi(\bb{x}) $ exists, $\nabla \psi(\bb{x})=\bb{\zeta} $, and
  $\nabla\psi$ is continuous at $\bb{x}$ relative to the set on which it is
  defined.
\end{enumerate}
\end{theorem}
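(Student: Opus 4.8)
The plan is to treat $\psi$ as a locally Lipschitz function (which it is under the hypotheses of \thref{thm:subdifferential}, the only setting in which Clarke's generalized gradient is defined) and to reduce everything to two standard structural facts. By Rademacher's theorem $\psi$ is differentiable almost everywhere; write $D_\psi\subset\RR^l$ for the full-measure set of differentiability points, so that $\nabla\psi$ is defined precisely on $D_\psi$. The two facts I would invoke are: (i) the limiting-gradient representation
\begin{equation*}
\partial\psi(\bb{x}) = \operatorname{conv}\Big\{ \lim_{i\to\infty}\nabla\psi(\bb{x}_i) : \bb{x}_i\to\bb{x},\ \bb{x}_i\in D_\psi \Big\},
\end{equation*}
and (ii) the support-function duality $\psi^\circ(\bb{x};\bb{v}) = \max\{ \langle\bb{w},\bb{v}\rangle : \bb{w}\in\partial\psi(\bb{x})\}$, where $\psi^\circ$ denotes Clarke's generalized directional derivative.

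For the implication (2)$\Rightarrow$(1) I would argue directly from (i). If $\nabla\psi(\bb{x})$ exists (so $\bb{x}\in D_\psi$), equals $\bb{\zeta}$, and $\nabla\psi$ is continuous at $\bb{x}$ relative to $D_\psi$, then along every sequence $\bb{x}_i\to\bb{x}$ with $\bb{x}_i\in D_\psi$ one has $\nabla\psi(\bb{x}_i)\to\bb{\zeta}$. Hence the set inside the convex hull in (i) reduces to the single point $\bb{\zeta}$, so $\partial\psi(\bb{x})=\{\bb{\zeta}\}$.

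For the converse (1)$\Rightarrow$(2) I would split the conclusion into a continuity part and a differentiability part. The continuity part again follows from (i): every cluster point of $\nabla\psi(\bb{x}_i)$ along $D_\psi$-sequences $\bb{x}_i\to\bb{x}$ lies in $\partial\psi(\bb{x})=\{\bb{\zeta}\}$, and since the gradients are uniformly bounded near $\bb{x}$ (local Lipschitz continuity), boundedness together with a unique cluster point forces $\nabla\psi(\bb{x}_i)\to\bb{\zeta}$, which is exactly continuity of $\nabla\psi$ at $\bb{x}$ relative to $D_\psi$. For the differentiability part I would use (ii): the assumption $\partial\psi(\bb{x})=\{\bb{\zeta}\}$ gives $\psi^\circ(\bb{x};\bb{v})=\langle\bb{\zeta},\bb{v}\rangle$ and likewise $\psi^\circ(\bb{x};-\bb{v})=-\langle\bb{\zeta},\bb{v}\rangle$ for every $\bb{v}$, so the generalized directional derivative agrees with its lower counterpart $-\psi^\circ(\bb{x};-\bb{v})$. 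This collapses the $\limsup$ and $\liminf$ of the difference quotients and yields the two-sided (strict) limit
\begin{equation*}
\lim_{\substack{\bb{y}\to\bb{x}\\ t\downarrow 0}} \frac{\psi(\bb{y}+t\bb{v})-\psi(\bb{y})}{t} = \langle\bb{\zeta},\bb{v}\rangle,
\end{equation*}
i.e.\ $\psi$ is strictly differentiable at $\bb{x}$, which in particular implies that $\nabla\psi(\bb{x})$ exists in the ordinary sense and equals $\bb{\zeta}$.

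The step I expect to be the main obstacle is precisely this passage from the singleton property to genuine differentiability \emph{at the point} $\bb{x}$ in the converse direction. Knowing $\psi^\circ(\bb{x};\bb{v})=-\psi^\circ(\bb{x};-\bb{v})$ only gives the strict directional limit for each fixed $\bb{v}$; upgrading this pointwise-in-$\bb{v}$ statement to full Fr\'echet differentiability requires uniformity of the convergence over the unit sphere. This is where the uniform Lipschitz bound near $\bb{x}$ does the real work: the difference quotients are equi-Lipschitz in $\bb{v}$, so pointwise convergence on the compact sphere upgrades to uniform convergence by an Arzel\`a--Ascoli / equicontinuity argument, from which the linear approximation property defining the derivative follows. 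Everything else reduces to bookkeeping with the two representations (i) and (ii).
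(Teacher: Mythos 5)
Your proof is correct, but note that the paper does not actually prove this statement at all: \thref{thm:subdifferential-singleton} is imported verbatim from \cite{clarke1975generalized}, and the text's entire ``proof'' is that citation. What you have written is, in substance, a reconstruction of Clarke's own argument for that result. Your fact (i) is Clarke's gradient formula (indeed, in the 1975 paper it is the \emph{definition} of $\partial\psi$, which makes your (2)$\Rightarrow$(1) direction and the continuity half of (1)$\Rightarrow$(2) immediate there; in later treatments, where $\partial\psi$ is defined through the support-function duality (ii), formula (i) becomes a nontrivial theorem resting on Rademacher's theorem, so you should be explicit about which object is the definition to avoid circularity). Your identification of the real difficulty is also the right one: the singleton hypothesis only yields $\psi^\circ(\bb{x};\bb{v})=-\psi^\circ(\bb{x};-\bb{v})=\langle\bb{\zeta},\bb{v}\rangle$, i.e.\ a strict directional limit for each fixed $\bb{v}$, and the upgrade to genuine (strict, hence Fr\'echet) differentiability uses exactly the equi-Lipschitz bound on the difference quotients to convert pointwise convergence on the unit sphere into uniform convergence --- this is the same ``convergence is automatically uniform on compacts by the Lipschitz condition'' step that Clarke invokes. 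Two small bookkeeping points: in the continuity half of (1)$\Rightarrow$(2) you should prove differentiability at $\bb{x}$ first (otherwise ``continuity of $\nabla\psi$ at $\bb{x}$'' is not yet meaningful), and you should record that $\psi$ is indeed locally Lipschitz in the paper's setting (this follows from the uniform compactness and MFCQ hypotheses under which the marginal function is studied). With those orderings made explicit, your argument is a complete and faithful substitute for the omitted proof.
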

Based on these general results, we can state the differentiability result for  $F_\epsilon$:
\begin{theorem}\label{thm:differentiability-minimax}
Let
\begin{equation}\label{eq:psi-connected}
\begin{aligned}
F_\epsilon(\bb{\rho}):&= \max\limits_{\bb{\delta}\in
  \Delta_{\text{ad}(\bb{\rho})}, \bb{u}\in \RR^m}
  \mathcal{J}_\epsilon(\bb{\rho},\bb{\delta},\bb{u})\\
  &=\max\limits_{\bb{\delta}\in
  \Delta_{\text{ad}(\bb{\rho})}, \bb{u}\in \RR^m}2\bb{f}^\top \bb{u}-\sum\limits_{e=1}^n\bb{u}^\top
\bb{K}_e(\bb{\rho},\bb{\delta}) \bb{u}-\frac{\epsilon}{2}\|\bb{\delta}\|^2,
\end{aligned}
\end{equation}
with $\epsilon > 0$ and $\Delta_{\text{ad}}(\bb{\rho})$ is given by formula \eqref{eq:delta-ad-rho}. 
%\begin{equation*}
%\begin{aligned}
%\Delta_{\text{ad}}(\bb{\rho}) = \left\{ \bb{\delta}\in\RR^n :
%  &0\leq \delta_e \leq 1,\ e=1,\ldots,n,\\
%  &\sum_{e=1}^n v_e \tilde{\rho}_e^p\delta_e/|\Omega| \leq D \right\}
%\end{aligned}
%\end{equation*}
%for $D>0$. 
Let further $(\bb{\bar{\delta}},\bb{\bar{u}})$ denote the unique maximizer
with $F_\epsilon(\bb{\bar{\rho}})=\mathcal{J}_\epsilon(\bb{\bar{\rho}},\bb{\bar{\delta}},\bb{\bar{u}})$
for $\bb{\bar{\rho}}\in U_{\text{ad}}$ and assume that the LICQ condition is satisfied at every point $(\bb{\bar{\delta}},\bb{\bar{u}})\in M(\bb{\bar{\rho}})$.

Then the following holds: %\todog{als $(t\lambda_{min} +(1-t)\lambda_{max})\nabla g$ schreiben?}
%\begin{equation}\label{thm:differentiability-minimax-subdifferential}
%\partial\psi(\bb{\bar{\rho}})= \operatorname{conv} \left\{  
%\nabla_{\bb{\rho}}\mathcal{J}_\epsilon(\bb{\bar{\rho}},\bb{\bar{\delta}},\bb{\bar{u}})
%-\bigcup_{\bar{\lambda}\in
%K(\bb{\bar{\rho}},\bb{\bar{\delta}},\bb{\bar{u}})}\bar{\lambda}\nabla_{\bb{\rho}}\sum_{e=1}^n
%  v_e \tilde{\bar{\rho}}_e^p\delta_e/|\Omega| \right\} 
%\end{equation} 
%where $K$ is the set of optimal Lagrange
%multipliers corresponding to the resource constraint
%\begin{equation*}
%\begin{aligned}
%K(\bb{\bar{\rho}},\bb{\bar{\delta}},\bb{\bar{u}})= \bigg\{ 
%\lambda &\geq 0 :\ \exists(\bb{\lambda}^-,\bb{\lambda}^+)\in
%\RR^{n}\times\RR^n\\
%&\text{ with } 
%\lambda^-\delta_e=\lambda^+(1-\delta_e)=0,\ e=1,\ldots,n,\\
%&\bb{\lambda}^-,\bb{\lambda}^+\geq\bb{0},\\ 
%&\nabla_{\bb{\rho}}\mathcal{J}_\epsilon(\bb{\bar{\rho}},\bb{\bar{\delta}},\bb{\bar{u}})
%=-\bb{\lambda}^-+\bb{\lambda}^+\\
%&\qquad\qquad\qquad +\lambda^\top\nabla_{\bb{\rho}}\sum_{e=1}^n v_e
%\tilde{\bar{\rho}}_e^p\delta_e/|\Omega|\bigg\}.
%\end{aligned}
%\end{equation*}
%If $K(\bb{\bar{\rho}},\bb{\bar{\delta}},\bb{\bar{u}})$ consists of a single
%value $\bar{\lambda}$, then 
$F_\epsilon$ is continuously differentiable at
$\bb{\bar{\rho}}$ and
\begin{equation}\label{eq:gradient}
\nabla F_\epsilon(\bb{\bar{\rho}})=\left[\bb{\bar{u}}^\top\frac{\partial
\bb{K}(\bb{\bar{\rho}},\bb{\bar{\delta}})}{\partial\rho_e}\bb{\bar{u}}\right]_{e=1}^n
-\bar{\lambda}\nabla_{\bb{\rho}} \left(\sum_{e=1}^n
  v_e \tilde{\bar{\rho}}_e^p\delta_e/|\Omega| \right).
\end{equation}
\end{theorem}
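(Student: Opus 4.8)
The plan is to cast $F_\epsilon$ into the abstract marginal-function framework \eqref{eq:general_minmax} and then apply \thref{thm:subdifferential} followed by \thref{thm:subdifferential-singleton}. Concretely I would set $\bb{x}=\bb{\rho}$, $\bb{y}=(\bb{\delta},\bb{u})$, $\mathpzc{j}=\mathcal{J}_\epsilon$, and take the point-to-set map $Y(\bb{\rho})=\Delta_{\text{ad}}(\bb{\rho})\times\RR^m$, whose defining functions $G$ (the box constraints $0\le\delta_e\le1$, independent of $\bb{\rho}$) and $H$ (the single equality $\sum_e v_e\tilde{\rho}_e^p\delta_e/|\Omega|-D=0$, depending on $\bb{\rho}$ only through the linear filter and the smooth map $\bb{\rho}\mapsto\tilde{\bb{\rho}}^p$) are continuously differentiable in both arguments. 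With this identification, the structural hypotheses of \thref{thm:subdifferential} that remain are nonemptiness of $Y(\bb{\bar\rho})$ (feasibility of $\Delta_{\text{ad}}(\bb{\bar\rho})$, which I take as given), uniform compactness near $\bb{\bar\rho}$, and the constraint qualification; the latter is precisely the LICQ assumed in the statement, which in turn implies the MFCQ required by the first inclusion of \thref{thm:subdifferential}.

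The hard part — and the reason the abstract theorem cannot simply be quoted — is that $Y(\bb{\rho})$ contains the unbounded factor $\RR^m$, so it fails to be uniformly compact. I would remove this obstruction by exploiting the strict concavity from \thref{thm:concavity-J-eps}: for fixed feasible $(\bb{\rho},\bb{\delta})$ the unconstrained $\bb{u}$-maximizer of $\mathcal{J}_\epsilon$ is the state $\bb{u}=\bb{K}(\bb{\rho},\bb{\delta})^{-1}\bb{f}$, and since $\rho_e\ge\rho_{\text{min}}>0$ forces $\tilde{\rho}_e\ge\rho_{\text{min}}$ and $E(\delta_e)\ge E_D>0$, the matrix $\bb{K}=\sum_e\tilde{\rho}_e^p E(\delta_e)\hat{\bb{K}}_e$ has smallest eigenvalue $\lambda_{\min}$ bounded below uniformly as $(\bb{\rho},\bb{\delta})$ ranges over a neighborhood $N(\bb{\bar\rho})$ times $[0,1]^n$. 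Hence $\|\bb{u}\|\le\|\bb{f}\|/\lambda_{\min}$ is uniformly bounded, and I may intersect the $\bb{u}$-component of $Y(\bb{\rho})$ with a closed ball of radius $r$ about $\bb{0}$, with $r$ chosen large enough that the maximizer lies in its interior for every $\bb{\rho}\in N(\bb{\bar\rho})$. The truncated map is uniformly compact, the added ball constraint is inactive at the maximizer (so it carries a zero multiplier and affects neither the LICQ nor the gradient formula), and the truncated marginal function coincides with $F_\epsilon$ on $N(\bb{\bar\rho})$.

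With uniform compactness restored, \thref{thm:subdifferential} applies. Strict concavity of $\mathcal{J}_\epsilon$ in $(\bb{\delta},\bb{u})$ makes the maximizer unique, so $M(\bb{\bar\rho})=\{(\bb{\bar\delta},\bb{\bar u})\}$ is a singleton, and the assumed LICQ makes the optimal multiplier pair unique; consequently the convex hull in \eqref{eq:subdifferential} collapses to a single point, and $\partial F_\epsilon(\bb{\bar\rho})$ is a singleton. Invoking \thref{thm:subdifferential-singleton} then upgrades this to the desired conclusion: $\nabla F_\epsilon(\bb{\bar\rho})$ exists and equals that single point, and $\nabla F_\epsilon$ is continuous at $\bb{\bar\rho}$, i.e. $F_\epsilon$ is continuously differentiable there.

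It remains to read off the gradient from \eqref{eq:subdifferential}. Since the box constraints $G$ do not depend on $\bb{\rho}$, the term $\bb{\bar\lambda}^\top\nabla_{\bb{\rho}}G$ vanishes, and differentiating the explicit $\bb{\rho}$-dependence of $\mathcal{J}_\epsilon$ — which enters only through $\bb{K}$, as the load and the regularization term are $\bb{\rho}$-independent — produces the stiffness-sensitivity term, i.e. the first summand of \eqref{eq:gradient}. The only equality constraint then contributes $\nabla_{\bb{\rho}}\!\left(\sum_{e=1}^n v_e\tilde{\bar\rho}_e^p\delta_e/|\Omega|\right)$ weighted by its unique multiplier, denoted $\bar\lambda$ in \eqref{eq:gradient}. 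Together these reproduce exactly \eqref{eq:gradient}, completing the proof.
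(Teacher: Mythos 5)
Your proposal follows essentially the same route as the paper's own proof: identify $Y(\bb{\rho})=\Delta_{\text{ad}}(\bb{\rho})\times\RR^m$ and $\mathpzc{j}=\mathcal{J}_\epsilon$, restore uniform compactness by artificially bounding $\bb{u}$ (justified by the uniform positive definiteness of $\bb{K}$ and $\bb{u}=\bb{K}^{-1}\bb{f}$), then combine \thref{thm:concavity-J-eps}, the assumed LICQ, \thref{thm:subdifferential} and \thref{thm:subdifferential-singleton} to collapse the generalized gradient to a singleton and read off \eqref{eq:gradient}. Your write-up is in fact somewhat more careful than the paper's (explicit eigenvalue bound, inactivity and zero multiplier of the added ball constraint), but the approach is the same and correct.
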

\begin{remark}
%The single-valuedness of the Lagrange multipliers can be practically assured for
%any meaningful optimization setting. 
Although we can not prove that LICQ holds in general, we would like to remark that LICQ is expected to hold in any practically relevant situation. To see this, we note
that the only way that LICQ will not hold at
some point $\bb{\bar{\rho}}$ is that the worst-case solution $\bb{\bar{\delta}}$ is fully
binary and $\sum_{e:\bar{\delta}_e=1}v_e\tilde{\bar{\rho}}_e/|\Omega|=D$.
This is very unlikely for any $D<|\Omega|$. %because of the maximization of a strictly concave function, even though the maximum without a volume constraint lies at
%a corner in $\bb{\delta}\equiv 1$. 
Furthermore, as $\bb{\bar{\delta}}$ is mostly large where $\bb{\tilde{\rho}}^p$ has value $1$, $D$ may be chosen such
that $\sum_{e:\bar{\delta}_e=1}v_e/|\Omega|$ cannot equal $D$ (e.\,g. for a
uniform finite element size s.\,t. $n\cdot D\notin \mathbb{N}$). Finally, in practice, no
setting could be found which resulted in a 0-1 distribution of $\bb{\delta}$. 
We furthermore remark that analogous results can be proven, if instead of formula \eqref{eq:delta-ad-rho}, one of the formulae \eqref{eq:delta-ad}, \eqref{eq:delta-ad-avg} or \eqref{eq:delta-ad-avg2} are used.
\end{remark}
\begin{proof}[\thref{thm:differentiability-minimax}]
We want use \thref{thm:subdifferential}. We have
$Y(\bb{\rho})={\Delta_{\text{ad}}(\bb{\rho}) \times \RR^m}$ and
$\mathpzc{j}=\mathcal{J}_\epsilon(\bb{\rho},\bb{\delta},\bb{u})$. 
While $Y(\bb{\rho})$ is nonempty and $\Delta_{\text{ad}}( \bb{\rho})$ is uniformly compact, $\RR^m$ is 
evidently unbounded. However, as in the optimum we have
$\bb{u}=\bb{K}^{-1}(\bb{\rho},\bb{\delta})\bb{f}$ with $\bb{K}$ uniformly positive
definite, the optimal $\bb{u}$ is indeed uniformly bounded. Thus, we can modify
the admissible set of $\bb{u}$ to include artificial lower and upper bounds
without changing the solution of the optimization problem. Now, as we assume LICQ to hold, from 
\thref{thm:concavity-J-eps} and \thref{thm:subdifferential-singleton} we directly obtain the continuous differentiability of $F_\epsilon$. Finally, the gradient formula \eqref{eq:gradient}
follows directly from the general formula \eqref{eq:subdifferential}.

\end{proof}
\begin{remark}
In principal, we can now solve problem \eqref{eq:general_minmax} by our favorite gradient based optimization solver, where in each iteration we solve a maximization problem to evaluate $F_\epsilon$ and apply formula \eqref{eq:gradient} to evaluate the associated gradient. However, this will only work well in practice, if the inner maximization problem is solved with high accuracy in each iteration. Moreover, in order to obtain a good approximation of the gradient, also tight numerical approximations of the Lagrangian multipliers associated with the constraints of the inner problem have to be accessible. Taking these observations as well as the sparsity structure of the inner problem into account it is advisable to use a second order method for the evaluation of $F_\epsilon$. As a matter of fact, interior point methods belong to the most efficient second order optimization methods, in particular if the problem to be solved belongs to the class of convex minimization problems. As the maximization problem required to evaluate $F_{\epsilon}$ can be easily turned into a problem of this kind, we suggest to use an interior point method also here. Still the problem remains that the precise evaluation of $F_\epsilon$ can be numerically quite demanding. As a remedy, in the following section, we suggest an alternative regularization scheme which is more tightly connected to interior point methods and thus allows in general for a more efficient solution of the regularized inner maximization problem.
\end{remark}

\subsection{Regularization using a barrier method}\label{sec:smoothed} 
In this section, we study the approach of using a barrier term for the
regularization of the minimax problem. This will allow to show
differentiability without using the minimax theorems of the
previous section and, more importantly, turn out to be superior w.~r.~t. computational efficiency. \\
We consider (\ref{eq:problem-energy}) as a specific bilevel optimization
problem as follows:
\begin{align}\label{eq:problem-bilevel1}
\left\{
\begin{array}{l}
	\min\limits_{\bb{\rho}\in
	U_{\text{ad}},\bb{\delta}\in \RR^n, \bb{u} \in \RR^m}
\mathcal{J}_0(\bb{\rho},\bb{\delta},\bb{u}) = 2\bb{f}^\top\bb{u}-\sum\limits_{e=1}^n\bb{u}^\top
\bb{K}_e(\bb{\rho},\bb{\delta})\bb{u} \\
	\text{s.t.: } ({\bb{\delta},\, \bb{u}}) \in
	\underset{{\bb{\delta}\in\Delta_{\text{ad}},\, \bb{u}\in
	\RR^m}}{\operatorname{argmax}}\ 2\bb{f}^\top
	\bb{u}-\sum_{e=1}^n\bb{u}^\top \bb{K}_e(\bb{\rho},\bb{\delta}) \bb{u}.
\end{array}
\right.
\end{align}
%Furthermore, we can write the problem in a more general form with a %variable
%upper-level objective function $\mathpzc{j}$:
%\begin{align}\label{eq:problem-bilevel2}
%\left\{
%\begin{array}{l}
%	\min\limits_{\bb{\rho}\in
%	U_{\text{ad}}}\mathpzc{j}(\bb{\rho},\bb{\delta},\bb{u}) \\
%	\text{s.t.: } ({\bb{\delta},\, \bb{u}})({\bb{\rho}}) =
%	\underset{{\bb{\delta}\in\Delta_{\text{ad}},\, \bb{u}\in
%	\RR^m}}{\operatorname{argmax}}\ 2\bb{f}^\top
%	\bb{u}- \sum\limits_{e=1}^n\bb{u}^\top
%	\bb{K}_e(\bb{\rho},\bb{\delta}) \bb{u}.
%\end{array}
%\right.
%\end{align}
%This is equivalent to problem (\ref{eq:problem-bilevel1}) for
%$\mathpzc{j}= 2\bb{f}^\top \bb{u}-\sum_{e=1}^n\bb{u}^\top
%\bb{K}_e(\bb{\rho},\bb{\delta}) \bb{u}$ and because of the %principle of
%potential energy for $\mathpzc{j}=\bb{f}^\mathrm{T}
%\bb{u}(\bb{\rho},\bb{\delta})$. However, this may also allow to %consider
%additional properties in the upper-level cost functional.
%\\
Furthermore, we write the uncertainty set in the
generic form
\begin{equation}\label{eq:delta-generic}
  \Delta_{\text{ad}} = \left\{ \bb{\delta}\in\RR^n :
  0\leq \delta_e \leq 1,\ e=1,\ldots,n,\ g(\bb{\rho},\bb{\delta}) \leq 0
  \right\},
\end{equation}
where
\begin{equation}\label{eq:vol-delta}
g(\bb{\rho},\bb{\delta})=\sum_{e=1}^n v_e \delta_e/|\Omega|-D \leq 0
\end{equation}
for the set (\ref{eq:delta-ad}) and 
\begin{equation}\label{eq:vol-delta-rho}
g(\bb{\rho},\bb{\delta})=\sum_{e=1}^n
v_e \tilde{\rho}_e^p\delta_e/|\Omega|-D \leq 0.
\end{equation}
for (\ref{eq:delta-ad-rho}). Note that again we may alternatively use the uncertainty sets \eqref{eq:delta-ad-avg} or \eqref{eq:delta-ad-avg2} with only minor modifications.\\
The idea is now to replace the lower-level (or inner) optimization problem by
its optimality conditions and solve the upper-level (or outer) optimization
problem as a constrained problem with these optimality conditions given as
constraints. We remark that for fixed ${\bb{\rho}}$ the uncertainty sets above are affine linear and thus no constraint qualification is required. Thus we may write the optimality conditions
of the lower-level optimization problem as follows: %(\ref{eq:problem-bilevel2}):
\begin{equation}\label{eq:kkt}
\begin{aligned}
\tilde{\rho}_e^p(E(\delta_e))^2
\left( \frac{1}{E_D}-\frac{1}{E_0} \right)\bb{u}^\top \hat{\bb{K}}_e\bb{u} &=
-\lambda_e^- +\lambda_e^+ +\lambda \frac{\partial
g(\bb{\rho},\bb{\delta})}{\partial \delta_e}\\
&\qquad\qquad\qquad e=1,\ldots,n\\
\bb{K}\bb{u}&=\bb{f} \\
g(\bb{\rho},\bb{\delta})&\leq 0\\
-\bb{\delta}&\leq \bb{0} \\
\bb{\delta}-1&\leq \bb{0} \\
\bb{\lambda}^- &\geq \bb{0} \\
\bb{\lambda}^+ &\geq \bb{0} \\
\lambda &\geq \bb{0} \\
-\lambda_e^-\delta_e &= 0\qquad e=1,\ldots,n\\
\lambda_e^+(\delta_e-1) &= 0\qquad e=1,\ldots,n\\
\lambda \left( g(\bb{\rho},\bb{\delta}) \right) &=0
\end{aligned}
\end{equation}
%The problem is that we cannot use the system (\ref{eq:kkt}) directly as a constraint as the inequalities and complementarity constraints make it potentially nonsmooth. However, we see that the inequality bound constraints on $\bb{\delta}$ and the inequality in the degradation volume constraint are the only reason the optimality conditions are not a system of equations. If it were a set of equations, a standard adjoint calculus with the optimality system of the inner minimization problem could be used to obtain derivatives for the upper-level problem. 
However it turns out that the resulting problem is difficult to solve in practice mainly due to the presence of the complementarity constraints on the lower level. Thus, we try to get rid of those in the sequel. First of all we find
\begin{equation*}
\begin{aligned}
\frac{\partial}{\partial\delta_e}&\left(2\bb{f}^\top
	\bb{u}- \sum\limits_{e=1}^n\bb{u}^\top
	\bb{K}_e(\bb{\rho},\bb{\delta}) \bb{u}\right)\\
	&=(E(\delta_e))^2
\left( \frac{1}{E_D}-\frac{1}{E_0} \right)\bb{u}^\top \hat{\bb{K}}_e\bb{u}>0.
\end{aligned}
\end{equation*}
Using this it is seen from \eqref{eq:problem-bilevel1} that the constraint for the total uncertainty will be always at the bound and can hence
simply replaced by an equality constraint without changing the solution of the problem. This
leaves us with the bound constraints on $\bb{\delta}$, which are simply approximated using appropriate barrier terms in the objective. We obtain the problem
\begin{equation}
\begin{aligned}\label{eq:problem-bilevel}
&\min_{\bb{\rho}\in
U_{\text{ad}}} \mathcal{J}_0(\bb{\rho},\bb{\delta},\bb{u})\\
&\text{s.t.: } ({\bb{\delta},\, \bb{u}}) \in
\underset{{\bb{\delta},\, \bb{u}\in V}}{\operatorname{argmax}}\ 2\bb{f}^\top
\bb{u}-\sum_{e=1}^n\bb{u}^\top
\bb{K}_e(\bb{\rho},\bb{\delta})
\bb{u}\\
&\qquad\qquad\qquad\qquad+\mu^*\sum_{e=1}^n\left(\operatorname{log}({\delta}_e) +
\operatorname{log}(1-{\delta}_e)\right),\\
&\hspace{22mm}g(\bb{\rho},\bb{\delta})= 0,
\end{aligned}
\end{equation}
where $\mu^*$ is a small positive barrier parameter. Even though the application
of this barrier method does indeed change the solution of the optimization
problem, the influence will be minor if $\mu^*$ is chosen small enough.
Moreover, the additional term has another benefit. Using similar arguments as in the proof of \thref{thm:concavity-J-eps}, we can show for the regularized function
\begin{align}\label{eq:J-mu}
&\mathcal{J}_{\mu^*}(\bb{\rho},\bb{\delta},\bb{u}) :=\nonumber \\
&2\bb{f}^\top
\bb{u}-\sum_{e=1}^n\bb{u}^\top \bb{K}_e(\bb{\rho},\bb{\delta})
\bb{u}+\mu^*\sum_{e=1}^n\left(\operatorname{log}({\delta}_e) + \operatorname{log}(1-{\delta}_e)\right)
\end{align}
that
\begin{equation}\label{eq:hessian-pos-def-mu}
\begin{aligned}
\begin{pmatrix}\bb{x}\\ \bb{y}\end{pmatrix}^\top & \frac{\partial^2}{
(\partial ({\bb{\delta},\, \bb{u}}))^2}
\mathcal{J}_{\mu^*}(\bb{\rho},\bb{\delta},\bb{u}) \begin{pmatrix}\bb{x}\\
\bb{y}\end{pmatrix} =\\
&=\begin{pmatrix}\bb{x}\\ \bb{y}\end{pmatrix}^\top
\frac{\partial^2}{ (\partial ({\bb{\delta},\, \bb{u}}))^2}
\bigg[\mathcal{J}(\bb{\rho},\bb{\delta},\bb{u})\\
&\qquad+\mu^*\sum_{e=1}^n\Big(\operatorname{log}({\delta}_e)+ \operatorname{log}(1-{\delta}_e)\Big)\bigg] 
\begin{pmatrix}\bb{x}\hspace{1cm}\\
\bb{y}\end{pmatrix}\\ 
&\hspace{-.6em} = \begin{cases}
\leq \mu^* \sum_{e=1}^n x_e^2\left(
\frac{1}{\delta_e^2}+\frac{1}{(1-\delta_e)^2} \right) \ > 0 & \bb{x}\neq \bb{0}\\
= -\bb{y}^\top \bb{K} \bb{y} < 0 & \bb{x} = \bb{0}
\end{cases}
\end{aligned}
\end{equation}
for all $\left(\bb{x}^\top, \bb{y}^\top\right)\neq \bb{0}$ and thus strict
concavity of $\mathcal{J}_{\mu^*}$ in the joint variable $(\bb{\delta},\bb{u})$
for every $\mu^*>0$. \\

This implies that the constraint in \eqref{eq:problem-bilevel} can be equivalently stated through optimality conditions, which read as
{
\medmuskip=2.5mu
\thinmuskip=3.5mu
\thickmuskip=4mu
\begin{equation}\label{eq:kkt-mu}
\begin{aligned}
\tilde{\rho}_e^p (E(\delta_e))^2
&\left( \frac{1}{E_D}-\frac{1}{E_0} \right)\bb{u}^\top
\hat{\bb{K}}_e\bb{u}+\\
\mu^*\left( \frac{1}{\delta_e} - \frac{1}{1-\delta_e}
\right)
&= -\lambda\frac{\partial g}{\partial \delta_e}, \quad \text{for } e=1,\ldots,n\\
\bb{K}\bb{u}&=\bb{f} \\
g(\bb{\rho},\bb{\delta}) &= 0\\
\end{aligned}
\end{equation}
}
Now, having eliminated all complementarity constraints, we may formulate a new bilevel optimization problem as
{
\medmuskip=2.5mu
\thinmuskip=3.5mu
\thickmuskip=4mu
\begin{equation}
\begin{aligned}\label{eq:problem-nested}
&\min_{\bb{\rho}\in
	U_{\text{ad}},\bb{\delta}\in \RR^n, \bb{u} \in \RR^m}\mathcal{J}_0(\bb{\rho},\bb{\delta},\bb{u})\\
&\quad \text{s.t.: \eqref{eq:kkt-mu} holds.}
\end{aligned}
\end{equation}
}
and its implicit counterpart as
\begin{equation}
\begin{aligned}\label{eq:problem-nested2}
&\min_{\bb{\rho}\in
U_{\text{ad}}}\mathcal{J}_0(\bb{\rho},\bb{\delta}(\bb{\rho}),\bb{u}(\bb{\rho})).
\end{aligned}
\end{equation}
Here $\bb{\delta}(\bb{\rho})$ and $\bb{u}(\bb{\rho})$ are the solution operators associated with \eqref{eq:kkt-mu}. Note that $\bb{\delta}(\bb{\rho})$ and $\bb{u}(\bb{\rho})$ are well defined as the system \eqref{eq:kkt-mu} has a unique solution for each feasible $\bb{\rho}$ thanks to the strict concavity of \eqref{eq:J-mu}.

For the latter problem, we can now use a standard adjoint approach for the
calculation of the gradient. In order to do so, we use the Lagrangian
corresponding to the lower-level problem
\begin{equation}\label{L-mu*}
\begin{aligned}
\mathcal{L}_{\mu^*}(\bb{\rho},\bb{\delta},\bb{u}, \lambda) := &2\bb{f}^\top
\bb{u}-\sum_{e=1}^n\bb{u}^\top
\bb{K}_e(\bb{\rho},\bb{\delta})
\bb{u}\\
&+\mu^*\sum_{e=1}^n\left(\operatorname{log}({\delta}_e)+\operatorname{log}(1-{\delta}_e)\right)-\lambda g(\bb{\rho},\bb{\delta}),
\end{aligned}
\end{equation}
in order to rewrite the KKT conditions \eqref{eq:kkt-mu} as
\begin{equation} \label{KKT_lagr}
\frac{\partial \mathcal{L}}{\partial (\bb{\delta},\bb{u}, \lambda)}=\bb{0}.
\end{equation}
Now, using the adjoint calculus for problem (\ref{eq:problem-nested2}), we get:
\begin{equation}\label{Lagrangian}
\frac{\partial
\mathcal{J}_0(\bb{\rho},\bb{\delta}(\bb{\rho}),\bb{u}(\bb{\rho}))}{\partial
\rho_e}=\frac{\partial \mathcal{J}_0}{\partial\rho_e}
+\frac{\partial^2\mathcal{L}_{\mu^*}}{\partial\rho_e\partial\bb{\delta}}\bb{\tau}_{\bb{\delta}}
+\frac{\partial^2\mathcal{L}_{\mu^*}}{\partial\rho_e\partial\bb{u}}\bb{\tau}_{\bb{u}}
+\frac{\partial^2\mathcal{L}_{\mu^*}}{\partial\rho_e\partial\lambda}\tau_\lambda,
\end{equation}
where $\left(\bb{\tau}_{\bb{\delta}},\bb{\tau}_{\bb{u}},\tau_\lambda \right) $
are the solutions of the adjoint equations
\begin{equation}\label{eq:adjoint-smoothed}
\begin{pmatrix}
\frac{\partial^2\mathcal{L}_{\mu^*}}{(\partial\bb{\delta})^2} &
\frac{\partial^2\mathcal{L}_{\mu^*}}{\partial\bb{\delta}\partial\bb{u}} & 
-\frac{\partial g}{\partial \bb{\delta}}\\
\frac{\partial^2\mathcal{L}_{\mu^*}}{\partial\bb{u}\partial\bb{\delta}} &
-\bb{K} & \bb{0} \\
-\frac{\partial g}{\partial \bb{\delta}} & \bb{0} & \bb{0}
\end{pmatrix}
\begin{pmatrix}
\bb{\tau}_{\bb{\delta}} \\ \bb{\tau}_{\bb{u}} \\ \tau_\lambda
\end{pmatrix} = \begin{pmatrix}
\frac{\partial \mathcal{J}_0}{\partial\bb{\delta}} \\
\frac{\partial \mathcal{J}_0}{\partial\bb{u}} \\
\frac{\partial \mathcal{J}_0}{\partial\lambda}
\end{pmatrix}.
\end{equation}
Finally, if we slightly modify problem \eqref{eq:problem-nested2} by replacing the upper level objective $J_0$ by $J_{\mu^*}$ and use again formula \eqref{L-mu*} and \eqref{KKT_lagr}, the right hand side simplifies to
\begin{equation*}
\begin{aligned}
&\begin{pmatrix}
\frac{\partial\mathcal{J}_{\mu^*}}{\partial\bb{\delta}} \\
\frac{\partial\mathcal{J}_{\mu^*}}{\partial\bb{u}} \\
\frac{\partial\mathcal{J}_{\mu^*}}{\partial\lambda}
\end{pmatrix} = \begin{pmatrix}
\frac{\partial\mathcal{L}_{\mu^*}}{\partial\bb{\delta}}+\lambda\frac{\partial
g}{\partial \bb{\delta}}
\\
\bb{f}-\bb{K}\bb{u} \\ \bb{0}
\end{pmatrix} = \begin{pmatrix}
\lambda\frac{\partial g}{\partial \bb{\delta}} \\ \bb{0} \\ \bb{0}
\end{pmatrix}.
\end{aligned}
\end{equation*}
With this modified right hand side, we can directly find
$\left(\bb{\tau}_{\bb{\delta}},\bb{\tau}_{\bb{u}},\tau_\lambda \right)=\left(
\bb{0},\bb{0},-\lambda \right)$ to be a solution of system \eqref{eq:adjoint-smoothed}. Thus, no adjoint problem has to be solved and we obtain the derivative formula: 
\begin{equation}\label{eq:derivative-minimax}
\begin{aligned}
\frac{\partial
\mathcal{J}_{\mu^*}(\bb{\rho},\bb{\delta}(\bb{\rho}),\bb{u}(\bb{\rho}))}{\partial
\rho_e}&=\frac{\partial\mathcal{J}_{\mu^*}(\bb{\rho},\bb{\delta},\bb{u})}{\partial
\rho_e}-\lambda \frac{\partial g}{\partial \rho_e}\\
&=-\bb{u}^\top\frac{\partial \bb{K}}{\partial\rho_e}\bb{u}-\lambda
\frac{\partial g}{\partial \rho_e}.
\end{aligned}
\end{equation}
It is seen that this formula has precisely the same structure as the derivative formula for $F_\epsilon$ in \secref{sec:minimax}. This is not too surprising, as, by changing the objective as above, \eqref{eq:problem-nested2} is essentially turned into a minimax problem and thus the generic theory for the minimax-problems in \secref{sec:minimax} also applies to this situation.

\section{Algorithmic solution}
% As the problems formulated in sections \ref{sec:minimax} and \ref{sec:nonsmooth}
% may be potentially non-smooth, in principle non-smooth methods like the
% bundle concepts introduced by Lemar\'echal (\cite{lemarechal1975extension}) and
% Wolfe (\cite{wolfe1975method}) need to be used. These methods, however, have
% difficulties dealing with a large number of optimization variables and may
% therefore not be suitable for the large problems encountered in topology
% optimization. Consequently, we elect to refrain from using these algorithms and
\subsection{Lower-level problem solution}
Even though we have seen that gradients of the upper-level problem can be
easily computed using, e.~g., the minimax theorems in \secref{sec:minimax}, the lower-level problem still has to
be solved very precisely in each outer iteration for a sufficiently accurate
gradient. Using, for instance, the standard topology optimization algorithm MMA for the solution of the inner problem will
hence lead to an very high number of total iterations required. On the other hand, the
sparse Hessian of the lower-level maximization problem and its strict concavity
lend themselves to the use of second-order optimization methods and their much
better speed of convergence. For this, we take a quick look at interior-point
methods.

Interior-point methods replace inequalities with exactly the same barrier term
that was used in \secref{sec:smoothed} and iteratively solve the resulting problems for a
decreasing sequence of barrier parameters $\mu\searrow 0$. For the sake of completeness, We demonstrate this
concept for a simple model problem with objective $f:\RR^n\to\RR$, and a constraint
$g:\RR^n\to\RR$: The original problem
\begin{align*}
\min_{\bb{x}\in\RR^n}&f(\bb{x})\\
\text{ s.\,t.: }&g(\bb{x})=0\\
&-\bb{x}\leq\bb{0}
\end{align*}
is converted into a sequence of barrier problems parametrized by the barrier parameter $\mu>0$:
\begin{align*}
\min_{\bb{x}\in\RR^n}&f-\mu\sum_{e=1}^n\log(x_e) \\
\text{ s.\,t.: }&g(\bb{x})=0\\
\end{align*}
Each of the latter problems can be now approached by solving the optimality conditions
\begin{align*}
\nabla f(\bb{x})+\lambda\nabla g(\bb{x}) -\left(\frac{\mu}{x_e}\right)_{e=1}^n
&= \bb{0},\\
g(\bb{x})&=0.
\end{align*}
In order to get a full picture of the error made in comparison to the KKT system
of the original optimization problem, the term $(\mu/x_e)_{e=1}^n\geq\bb{0}$ is
interpreted as a Lagrange vector $\bb{z}\geq\bb{0}$ of the inequality
constraints. Substituting $\bb{z}:=(\mu/x_e)_{e=1}^n$ in the above
optimality conditions then gives the primal-dual equations
\begin{align*}
\nabla f(\bb{x})+\lambda\nabla g(\bb{x}) -\bb{z}
&= \bb{0},\\
g(\bb{x})&=0,\\
x_e z_e - \mu &= 0,\qquad e=1,\ldots,n.
\end{align*}
Note that for $\mu=0$ these are the KKT conditions of the original optimization
problem including the complementarity constraints. Hence, the last line in the
equation set is sometimes called perturbed complementarity and $\mu>0$ is the
desired complementarity error. This system, sometimes also referred to as central path equations is now solved repeatedly for decreasing $\mu$.\\
% \begin{align*}
% &\min\limits_{\bb{0}\leq\bb{\delta}\leq\bb{1},\bb{u}\in \RR^m} -2\bb{f}^\top
% \bb{u}+\sum_{e=1}^n\bb{u}^\top \bb{K}_e(\bb{\rho},\bb{\delta})
% \bb{u}\\
% &\quad\ \,\text{ s.\,t.: }g(\bb{\rho},\bb{\delta})= 0
% \end{align*}
% hence is solved for each $\mu$ as
% \begin{align*}
% &\min\limits_{\bb{\delta},\bb{u}\in \RR^m} -2\bb{f}^\top
% \bb{u}+\sum_{e=1}^n\bb{u}^\top \bb{K}_e(\bb{\rho},\bb{\delta})
% \bb{u}-\mu \left( \sum_{e=1}^n \log(\delta_e)+\log(1-\delta_e) \right) \\
% &\quad\text{ s.\,t.: }g(\bb{\rho},\bb{\delta})= 0,
% \end{align*}
% which is exactly the cost functional $\mathcal{J}_\mu$. Equivalently, this can
% be done by solving the optimality conditions
% \begin{align*}
% \nabla_{\bb{\delta}}\left(-2\bb{f}^\top
% \bb{u}+\sum_{e=1}^n\bb{u}^\top \bb{K}_e(\bb{\rho},\bb{\delta})
% \bb{u}\right)+\lambda\nabla_{\bb{\delta}}g(\bb{\rho},\bb{\delta})
% +\left( \frac{\mu}{\delta_e} \right)_{e=1}^n + \left( \frac{\mu}{1-\delta_e}
% \right)_{e=1}^n &=\bb{0}\\
% \nabla_{\bb{u}}\left(-2\bb{f}^\top
% \bb{u}+\sum_{e=1}^n\bb{u}^\top \bb{K}_e(\bb{\rho},\bb{\delta})
% \bb{u}\right)+\lambda\nabla_{\bb{u}}g(\bb{\rho},\bb{\delta})&=\bb{0}
% \end{align*}
For the problem considered in this work, the barrier objective obtained by the interior point procedure is exactly
the cost function $\mathcal{J}_{\mu}$, see \eqref{eq:J-mu}. Hence, we can use existing interior point
solvers and simply stop the decreasing sequence of $\mu$ at $\mu^*$. Moreover,
this automatically implies a numerical advantage of the regularization
with a barrier term compared with the $\epsilon$-regularization of
\secref{sec:minimax}: The solution of the lower-level problem is faster because
the decreasing $\mu$ sequence can be stopped consistently sooner. Furthermore, we
have seen that $\mu$ has a clear interpretation as complementarity error, while
the influence of $\epsilon$ on optimization results is much less clear.
\\
% The interior point solver IPOPT is perfectly suited for the solution of the
% lower-level problem described in \secref{sec:smoothed}. As a second-order
% optimizer, it takes advantage of the sparse Hessian. This allows for a fast
% convergence rate in the lower-level optimization to obtain sufficiently precise
% solutions for the subsequent computation of the gradients of the upper-level
% problem. Moreover, IPOPT uses the exact barrier term we introduced to
% regularize the problem. It starts with a barrier parameter of
\subsection{Algorithm}
We combine two different optimization algorithms for the upper- and lower-level
problems. We solve the lower-level optimization problem in
$(\bb{\delta},\bb{u})$ with the second order interior point optimizer IPOPT, see \cite{wachter2006implementation}, and then compute the gradients w.\,r.\,t.
$\bb{\rho}$ using formula (\ref{eq:derivative-minimax}). For the upper-level problem in
$\bb{\rho}$, we use the Method of Moving Asymptotes (MMA, see
\cite{Svanberg:02}). This procedure is repeated until convergence is achieved.
%\IncMargin{1em}
\begin{algorithm}[H]    
        \caption{Pseudoalgorithm for topology optimization with worst-case
        material uncertainties}
        \label{alg:robust}
        %\topline
                set initial iterates $\bb{\rho}^{0},\,\bb{\delta}^0$,
                number of iterations \textit{maxIter}, volume bound $V$, total uncertainty bound $D$,
                filter radius $R$ and smoothing parameter $\mu^*$\;
                solve standard topology optimization problem (e.\,g. using MMA)
                $\longrightarrow
                \bb{\rho}^1,\,\bb{u}^0(\bb{\rho^1})$\label{line:simp-solution}\; 
                \For{$k=1,...,$maxIter}{
                    solve lower-level problem using
                    IPOPT $\longrightarrow
                    \bb{\delta}^k(\bb{\rho}^k),\,\bb{u}^k(\bb{\rho}^k)$
                    \label{line:lower-level-problem}\; compute sensitivities of
                    outer cost functional using formula
                    (\ref{eq:derivative-minimax})\; do single MMA step
                    $\longrightarrow \bb{\rho}^{k+1}$\;
                \If{convergence criterium is satisfied}{
                 	break\;}
                 }
                   
%                \eIf{understand}{
%                go to next section\;
%                current section becomes this one\;
%                }{
%                go back to the beginning of current section\;
%                }
%                \For{$x\in X$}{
%                    $NbSuccInS(x) \longleftarrow 0$\;
%                    $NbPredInMin(x) \longleftarrow 0$\;
%                    $NbPredNotInMin(x) \longleftarrow |ImPred(x)|$\;
%                }
                postprocess data and visualize results\;
                %\bottomline
\end{algorithm}%\DecMargin{1em}
Note that line \ref{line:simp-solution} is optional, however doing at least a couple of
``cheaper'' non-robust optimization steps will result in being closer to an
optimal topology as a starting value and the convergence speed of the
subsequent robust iterations can be improved. For the results in this work, the (pure)
topology optimization is done until convergence, as it is used to obtain a
reference compliance value in Section 5.\\
\subsubsection{Software settings}
As a default, IPOPT starts with a smoothing parameter of \texttt{mu\_init}=0.1,
which is subsequently decreased during the optimization process unto a final
value of
\texttt{mu\_target}. The final smoothing parameter \texttt{mu\_target}
corresponds to $\mu^*$ in our formulation and we set it to
\texttt{mu\_target}=$10^{-7}$. Note that the choice of $\mu^*$ depends on the
scaling of the inner cost function. If it is chosen too large compared to the
cost function, the barrier term is large enough to push $\bb{\delta}$
significantly towards intermediate values, thus noticeably influencing the
optimization results. If it is chosen too small, the intended strict concavity
and regularization effects get less pronounced. 
More importantly, the number of iterations needed for solving
the lower-level problem as well as the computation times decrease when choosing
$\mu^*$ larger, as this determines how early IPOPT's usual decrease of $\mu$ to
0 may be stopped. As the initial barrier parameter, in the first
iteration we use the default value \texttt{mu\_init}=0.1.
After the first iteration, when we can reuse the previous iterates of
$\bb{\delta}$, $\bb{u}$ and IPOPT's Lagrangian parameters as a starting value,
we start IPOPT directly with \texttt{mu\_init}=$\mu^*$.\\
As termination criterion, IPOPT uses essentially the maximum norm of the KKT
error for the central path equations associated with \texttt{mu\_target}. The termination
criterion is set to a very strict value of \texttt{tol}=$10^{-10}$. Furthermore,
the terminal maximum constraint violation and complementarity error can be set
seperately as well. We use the values of
\mbox{\texttt{constr\_viol\_tol}=$10^{-10}$}
and \texttt{compl\_inf\_tol}=$0.0001$, respectively. The strict termination
criteria ensure that the optimality assumptions for the computation of the
upper-level gradients are fulfilled sufficiently accurate. \\
As the linear system solver within IPOPT, we use the HSL Mathematical Software
Library \cite{hsl2013collection}.
%\todog{nebenbedingung als ungleichung?}

\section{Numerical results}
\subsection{Numerical example setup}
As exemplary setup we use a loaded cantilever fixed on the left side with a
rectangular design domain of the dimension $[0,2]\times[0,1]$ as
depicted in \figref{fig:cantilever}. The cantilever is loaded with a line load
at the lower right bottom starting at $x=1.9$ of total magnitude $0.3$ in
direction $-y$. The domain is discretized using $300\times 150$ uniform
rectangular finite elements.
We use a linearly decaying density filter with a radius of $0.045$ corresponding
to the length of $4.5$ elements. As resource constraints, we use a total
relative material volume of 50\% ($V=0.5$). The penalty parameter is set to
$p=4$ and the lower bound of the topology variable to $\rho_{\text{min}}=0.01$.
The maximum number of iterations is $500$.\\
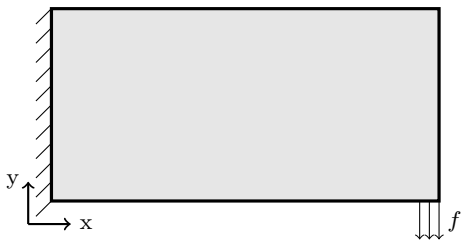
\begin{figure}[!hb] \centering

  \begin{tikzpicture}[scale=2.55]
  \draw[very thick,fill=black!10] (0,0) rectangle (2.,1);
  \foreach \y in {.0,.1,...,.91,1.0}
  { \draw (0,\y) -- (-.08, \y - .08); }
  \draw[->] (1.9,0.0) -- (1.9, -0.2) node[right,anchor=south west]{};
  \draw[->] (1.95,0.0) -- (1.95, -0.2) node[right,anchor=south west]{};
  \draw[->] (2.,0.0) -- (2., -0.2) node[right,anchor=south west] {$f$};
  % draw the coordinate axes
  \draw[->,thick] (-0.12,-0.12) -- (0.1,-0.12) node[right] {x} coordinate(x axis);
  \draw[->,thick] (-0.12,-0.12) -- (-0.12,0.1) node[left] {y} coordinate(y
  axis);
 \end{tikzpicture}
 \label{fig:example1}
 \caption{Loaded cantilever example}\label{fig:cantilever}
\end{figure}
The starting value for the robust optimization is for all results the solution
of the respective SIMP topology optimization problem without material
uncertainties, i.e. ($\bb{\delta}\equiv 0$). This reference problem and the resulting topology will be denoted as \textit{nominal problem} and \textit{nominal topology}, respectively. For
this nominal topology, we compute the compliance using the
material with Young's modulus $E_0$ as a \textit{nominal compliance}. This
nominal ``best-case compliance'' is then compared to the worst-case
compliance of the nominal topology, as well as the best- and worst-case
compliances of the robustly optimized topology.
\subsection{Additive manufacturing}
The complex process in additive manufacturing
can lead to an inhomogeneous microstructure of the material, e.g. by microscopic
pores.
This introduces variances in the local effective material properties. This
variance will be modeled as a uncertainty in the Young's modulus, which can also
be observed in measurements. The measured maximum and minimum Young's modulus
will then be used in the worst-case framework detailed in this work.
Specifically, the uncertain material properties are allowed to deviate by 30\%,
i.\,e. the normalized values in the optimization problem are $E_D = 0.7$ and
$E_0= 1$. These values are based on tensile testing results for AlSi10Mg
\citep{vdihandbuch}. Note that the variations in Young's modulus will change for
different processes, materials, machines and process parameters. E.\,g. for
reused polyamide powder, the elastic modulus may decrease rapidly up to half the
original value after several uses \citep{wegner2012betrachtung} while for the
maraging steel 18Ni-300 the variations may be closer to 14\% (see
\cite{hermann2016achievable}). As the amount of material variations occurring is
even harder to estimate, we use a number of different volumes for the
degeneration variable ($D=0.03,\,
D=0,06$ and $D=0.10$). However, we restrict ourselves in the visualization to a single representative. \\
Before stating the results, we can estimate the maximum and minimum effect that
these numbers can theoretically have on the compliance. As a change in material has a linear
influence on the compliance, a decrease in material stiffness from $1$ to $0.7$
on the whole domain would amount to $1/E_D=1/0.7\approx 143\%$ times the
compliance compared to using only the stiffest material. Similarly, we can
distribute the fraction of degenerated material uniformly on all elements to get
a lower bound of $((1-D)/E0+D/E_D)$ for $\bb{\delta}\equiv D$. However, as we can assume an
approximate 0-1-distribution for the topology, a better estimate would be to
distribute the degeneration variable only on the elements with $\bb{\rho}=1$.
Neglecting that the topology variable is not actually binary, this gives us
$\bb{\delta}\equiv D/V$ and a (slightly too large) lower bound estimate of the
minimum effect of the degradation of $((1-D/V)/E0+(D/V)/E_D)$. For the
total uncertainty bounds of $D=0.03,\, D=0,06$ and $D=0.1$, this gives
a compliance increase of about $2.57\%, 5.14\%$ and $8.57\%$, respectively.

The compliance values obtained from numerical results can be found in
\tabref{tab:am-linear} and are denoted as the increase w.\,r.\,t.
the compliance of the nominal topology (standard SIMP result) evaluated with material $E_0$ only. As expected, the worst-case compliance values greatly surpass
those of the lower bound estimates.
Furthermore, the compliance of the robust topology evaluated with $\bb{\delta}\equiv 0$ slightly
deteriorates, while the worst-case compliance improves about twice as
much, if we pass from the nominal to the robustly optimized topology. Looking at the visualization of some results in \figref{fig:am-linear}, we
first observe that the robust optimization does not change the nominal SIMP
topology a lot. Only on the left side we see that the structure gets slightly thicker
along the support. It also becomes apparent that the worst case distribution of the
degradation variable $\bb{\delta}$ is almost binary. \\
\begin{table}[!ht]
\caption{From left to right, the total uncertainty bound, the compliance of the worst-case topology with homogeneous material $E_0$ ($\bb{\delta}\equiv 0$), the compliance of the nominal topology with worst-case material distribution and the compliance of the worst-case topology with worst-case material distribution are displayed. All compliance values are specified as increase in comparison to the compliance of the nominal topology with nominal material, i.e. $\bb{\delta}\equiv 0$ on $\Omega$. The chosen material parameters are $E_0=1$ and $E_D=0.7$ in all cases.}
\label{tab:am-linear}
{\setlength{\tabcolsep}{.61em}
\begin{tabular}{|c|c|c|c|}
\hline
\makecell{total\\ uncertainty}  & \makecell{w.-c. topo,\\ $\bb{\delta}\equiv 0$} & \makecell{nom. topo,\\ worst-case $\bb{\delta}$}  &  \makecell{w.-c. topo,\\ worst-case $\bb{\delta}$} \\
\hline
$D = 0.03$ & +0.182\% & +7.69\% & +7.25\% \\
\hline
$D = 0.06$ & +0.317\% & +11.5\% & +10.8\% \\
\hline
$D = 0.10$ & +0.383\% & +15.6\% & +14.8\% \\
\hline
\end{tabular}
}
\end{table}

% In the results shown, the result of a standard SIMP optimization with fixed
% material properties is used as the starting value in the proposed algorithm.

\begin{figure*}[!htb]
\centering
\subfigure[worst-case distribution of Young's modulus for $D=0.003$ (nominal topology)]
{\includegraphics[width=.42\textwidth]{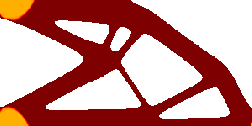}\label{amlinear/3e-2/Es}}\hfill
\raisebox{-9mm}{\subfigure{ \begin{tikzpicture}[scale=1.3]
  \node at (0,0)
  {\includegraphics[trim=0 0 0 0, clip,width=.025\textwidth]{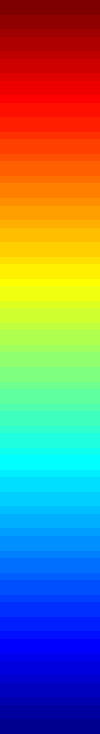}};
  \node[text width=.025\textwidth] at (0.08,1.55) {$1$};
  \node[text width=.025\textwidth] at (0.08,-1.55) {$0$};
  \node[text width=.025\textwidth] at (-0.26,-1.85) {$(E^{\text{eff}}, \bb{\delta}$)};
\end{tikzpicture}
}}\hfill
 \subfigure[worst-case distribution of Young's modulus for $D=0.003$ (topology from robust
 optimization)]
 {\includegraphics[width=.42\textwidth]{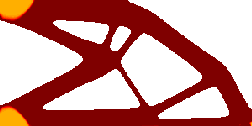}\label{amlinear/3e-2/Ef}}
%\subfigure[worst-case $\delta$ for $D=0.003$ (nominal topology)]
% {\includegraphics[width=.48\textwidth]{{smoothcanti300x150Ed=7e-01v=3e-02r=4.5e+00eps=0penal=4mu=1e-07mu0=1e-6vol=linear-delta-s.mat}.png}\label{amlinear/3e-2/ds}}\hfill
%\subfigure[worst-case $\delta$ for $D=0.003$ (robust topology)]
% {\includegraphics[width=.48\textwidth]{{smoothcanti300x150Ed=7e-01v=3e-02r=4.5e+00eps=0penal=4mu=1e-07mu0=1e-6vol=linear-delta-f.mat}.png}\label{amlinear/3e-2/df}}
%\subfigure[worst-case $\delta$ for $D=0.006$ (nominal topology)]
% {\includegraphics[width=.48\textwidth]{{smoothcanti300x150Ed=7e-01v=6e-02r=4.5e+00eps=0penal=4mu=1e-07mu0=1e-6vol=linear.mat-delta-s}.png}\label{amlinear/6e-2/ds}}\hfill
%\subfigure[worst-case $\delta$ for $D=0.006$ (robust topology)]
% {\includegraphics[width=.48\textwidth]{{smoothcanti300x150Ed=7e-01v=6e-02r=4.5e+00eps=0penal=4mu=1e-07mu0=1e-6vol=linear.mat-delta-f}.png}\label{amlinear/6e-2/df}}
\subfigure[worst-case distribution of $\delta$ for $D=0.01$ (nominal topology)]
 {\includegraphics[width=.42\textwidth]{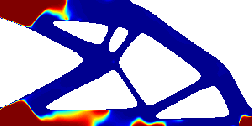}\label{amlinear/1e-1/ds}}\hfill
\subfigure[worst-case distribution of $\delta$ for $D=0.01$ (robust topology)]
 {\includegraphics[width=.42\textwidth]{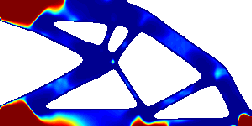}\label{amlinear/1e-1/df}}
 \caption{Optimization results showing worst-case material variations for
 different total uncertainty bounds. Material parameters are chosen as $E_0=1$, $E_D=0.7$. Left column: worst-case material distributions for nominal topology. Right column: worst-case material distributions for robust topology.}\label{fig:am-linear}
\end{figure*}

Next we test the impact of changing the uncertainty set. The motivation for this is that
a worst-case behaviour in the sense that the material properties are systematically weakened is not really expected in additive manufacturing practice. Looking e.\,g. at \cite{vdihandbuch}, we rather expect the Young's
modulus to fluctuate around a mean value. %This leads us to consider a different
%formulation for the volume of allowed material variation, namely
%\begin{equation}\label{eq:delta-ad-avg} 
%\begin{aligned}
%  \Delta_{\text{ad}}(\bb{\rho}) = \bigg\{ \bb{\delta}\in[0,1]^n :\ 
%  &e=1,\ldots,n,\ \sum_{e=1}^n
%  \frac{v_e\bb{\tilde{\rho}}_e^p\delta_e}{|\Omega|} = 0.4,\\
%  &\sum_{e=1}^n \frac{v_e(\delta_e-0.4)^2}{|\Omega|}	 \leq D \bigg\},
%\end{aligned}
%\end{equation}
%or similarly
An uncertainty model reflecting this behaviour has already been defined in Section 2 and is repeated here for the convenience of the reader:
\begin{equation*}%\label{eq:delta-ad-avg2}
\begin{aligned}
  \Delta_{\text{ad}}^{\text{AM}}(\bb{\rho}) = \bigg\{ \bb{\delta}\in[0,1]^n :\ 
  &e=1,\ldots,n,\ \sum_{e=1}^n
  \frac{v_e\bb{\tilde{\rho}}_e^p\delta_e}{|\Omega|} = 0.4,\\
  &\sum_{e=1}^n \frac{v_e(\bb{\tilde{\rho}}_e^p\delta_e-0.4)^2}{|\Omega|} \leq D \bigg\}.
\end{aligned}
\end{equation*}
This choice of $\Delta_{\text{ad}}$ causes that the material defects to fluctuate around an
expected value between the stiffest and the least stiff material.
Note that by the constraint $(v_e\bb{\tilde{\rho}}_e^p\delta_e)/|\Omega| = 0.4$, for $E_0=1$ and $E_D=0.7$ we get an average Young's modulus of $(0.6/E_0 + 0.4/E_D)^{-1}\approx 0.8537$, which is very close to the true mean value of $0.85$.
Furthermore, the quadratic constraint $\sum_{e=1}^n (v_e(\bb{\tilde{\rho}}_e^p\delta_e-0.4)^2)/|\Omega| \leq D$ models that values of $\bb{\delta}$ close to the average of $0.4$ (and accordingly Young's moduli close to the expected value) are comparatively cheap, while values further away are significantly more expensive. 
%The lower-level
%problem we solve thus changes to
%\begin{equation}
%\begin{aligned}\label{eq:inner-average}
%\min_{{\bb{\delta},\, \bb{u}\in V}}&\ -2\bb{f}^\top\bb{u}+ %\sum_{e=1}^n\bb{u}^\top\bb{K}_e(\bb{\rho},\bb{\delta})\bb{u}\\
%&\ -\mu^*\sum_{e=1}^n\left(\operatorname{log}({\delta}_e) +
%\operatorname{log}(1-{\delta}_e)\right),\\
%\mbox{s.t.:}&\quad\sum_{e=1}^n{v_e\bb{\tilde{\rho}}_e^p\delta_e}/{|\Omega|} =
%0.4\\
%&\quad\sum_{e=1}^n{v_e(\bb{\tilde{\rho}}_e^p\delta_e-0.4)^2}/{|\Omega|}	
%\leq D.
%\end{aligned}
%\end{equation}
%We quickly check the linear independence constraint qualification (LICQ):
%\begin{alignat*}{2}
%\frac{Cv_e\tilde{\bb{\rho}}_e^p}{|\Omega|}& =
%\frac{2v_e(\tilde{\bb{\rho}}_e^p\delta_e-0.4)}{|\Omega|} \qquad &&\forall
%e=1,\ldots,n\\
%\Leftrightarrow\qquad \frac{C}{2} +
%\frac{0.4}{\tilde{\bb{\rho}}_e^p}&=\delta_e \qquad &&\forall e=1,\ldots,n
%\end{alignat*}
%As $\tilde{\bb{\rho}}_e^p$ takes values of $1$ as well as $\rho_{\text{min}}^p$
%in every iteration of any standard topology optimization setting, there is no
%constant $C$ and $\bb{\delta}\in[0,1]^n$ satisfying above condition on every
%element. Therefore, the LICQ holds.\\
%As before, we change the inequality constraint to an equality when replacing
%the lower-level problem with its optimality conditions for the sensitivity
%analysis of the upper-level problem. Unlike the previous formulation, for this
%problem it is not obvious that the inequality constraint will always be active.
%Nevertheless, this was the case in all numerical examples presented in this
%work, when using the constraint as an inequality.\\

For this setting, we use the compliance of the nominal topology
with the average material generated by $\bb{\delta} \equiv 0.4$ on the
whole domain as reference compliance corresponding to the mean stiffness expected
from tensile testing.
The worst-case compliance is expected to be much closer to the reference
value now, as the allowed variation from the reference is halved. In addition the choice of the uncertainty set implies that weak material at one location has to be compensated in equal amounts by stiffer material in other places. We consider different bounds on the total material variations defined now by the quadratic constraint. Specifically we use $D\in\{ 0.01, 0.02, 0.03, 0.04, 0.06, 0.1 \}$. The relative compliance values obtained by numerical experiments for this setting can be found in \tabref{tab:am-average}; selected results are
visualized in Figure %\ref{fig:am-average},
\ref{fig:am-average2}. We note that
the compliance values are now rather close to each other. For $\bb{\delta} \equiv 0.4$ the
robust topology is only slightly worse than the nominal topology while still
consistently outperforming the latter by about twice that margin for the worst-case
material distribution. Looking at the images, the worst-case
material distribution looks a lot more consistent with the continuous variations of the
Young's modulus that we see in tensile testing.
While the worst-case material distribution still shows material with the lowest
possible stiffness in the most critical areas of the cantilever, now material
stiffnesses in the whole allowed range can be found, especially for the
lower total uncertainty bounds, e.g. $D=0.003$. %The
%results for the highest total uncertainty bound of $D=0.1$ in
%\figref{fig:am-average2} exhibits some interesting characteristics of
%the intermediate values of the worst-case degeneration. While it appears to be
%rather chaotic for the nominal topology, for the robust topology we observe
%patterns forming across some of the smaller bars of the cantilever.\\

In summary, the presented results suggest that in additive manufacturing the
influence of the uncertain material parameters on the compliance of topology
optimized parts is only minor. Nevertheless, the robust optimization methods are
consistently decreasing the influence of the worst-case material distribution.
\begin{table}[!bt]
\caption{Compliance increase in comparison to nominal topology with
``average material'' ($\bb{\delta}\equiv 0.4$); see \tabref{tab:am-linear} for a detailed explanation of columns 2 to 4. Material parameters are again chosen as $E_0=1$ and $E_D=0.7$.}
\label{tab:am-average}
{\setlength{\tabcolsep}{.61em}
\begin{tabular}{|c|c|c|c|}
\hline
\makecell{total\\ uncertainty}  & \makecell{w.-c. topo,\\ worst-case $\bb{\delta}$} & \makecell{w.-c. topo,\\ $\bb{\delta}\equiv 0.4$} & \makecell{nom. topo,\\ worst-case $\bb{\delta}$}\\ \hline
$D = 0.01$ & +0.050\% & +3.07\% & +2.93\% \\
\hline
$D = 0.02$ & +0.108\% & +3.86\% & +3.63\% \\
\hline
$D = 0.03$ & +0.170\% & +4.32\% & +3.98\% \\
\hline
$D = 0.04$ & +0.195\% & +4.58\% & +4.18\% \\
\hline
$D = 0.06$ & +0.304\% & +4.90\% & +4.28\% \\
\hline
$D = 0.10$ & +0.331\% & +5.08\% & +4.29\% \\
\hline
\end{tabular}
}
\end{table}

\begin{figure*}[!ht]
\centering
\subfigure[worst-case distribution of $\delta$ for $D=0.003$ (nominal topology)]
 {\includegraphics[width=.42\textwidth]{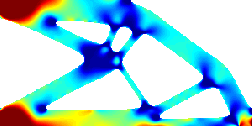}\label{amavg/3e-2/ds}}\hfill
\raisebox{-9mm}{\subfigure{ \begin{tikzpicture}[scale=1.3]
  \node at (0,0)
  {\includegraphics[width=.025\textwidth]{colormap-E.png}};
  \node[text width=.025\textwidth] at (0.08,1.55) {$1$};
  \node[text width=.025\textwidth] at (0.08,-1.55) {$0$};
  \node[text width=.025\textwidth] at (-0.26,-1.85) {$(E^{\text{eff}}, \bb{\delta}$)};
\end{tikzpicture}
}}\hfill
\subfigure[worst-case distribution of $\delta$ for $D=0.003$ (robust topology)]
 {\includegraphics[width=.42\textwidth]{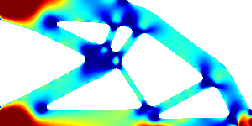}\label{amavg/3e-2/df}}
% \caption{Optimization results showing worst-case material variation for
% different volumes of material
% degeneration and $E_0=1$, $E_D=0.01$. Topologies thresholded at
% $\tilde{\bb{\rho}}=0.4$ (left column: nominal SIMP topology; right %column:
% optimal robust topology)}\label{fig:am-average}
%\end{figure*}

%\begin{figure*}[!ht]
%\centering
\subfigure[worst-case distribution of $\delta$ for $D=0.006$ (nominal topology)]
 {\includegraphics[width=.42\textwidth]{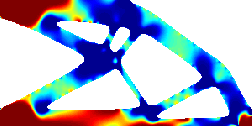}\label{amavg/6e-2/ds}}\hfill
\subfigure[worst-case distribution of $\delta$ for $D=0.006$ (robust topology)]
 {\includegraphics[width=.42\textwidth]{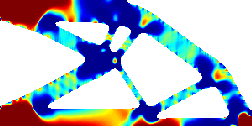}\label{amavg/6e-2/df}}
 \subfigure[worst-case distribution of Young's modulus for $D=0.006$ (nominal SIMP topology)]
{\includegraphics[width=.42\textwidth]{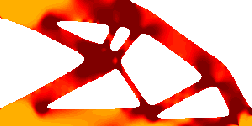}\label{amavg/6e-2/Es}}\hfill
 \subfigure[worst-case distribution of Young's modulus for $D=0.006$ (topology from robust
 optimization)]
 {\includegraphics[width=.42\textwidth]{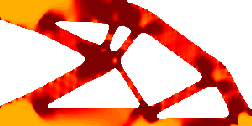}\label{amavg/6e-2/Ef}}
%\subfigure[worst-case $\delta$ for $D=0.01$ (nominal topology)]
% {\includegraphics[width=.46\textwidth]{{smoothcanti300_150Ed=7e-01v=1e-01r=4.5e+00eps=0penal=4mu=1e-07mu0=1e-7vol=average.mat-delta-s}.png}\label{amavg/1e-1/ds}}\hfill
%\subfigure[worst-case $\delta$ for $D=0.01$ (robust topology)]
% {\includegraphics[width=.46\textwidth]{{smoothcanti300_150Ed=7e-01v=1e-01r=4.5e+00eps=0penal=4mu=1e-07mu0=1e-7vol=average.mat-delta-f}.png}\label{amavg/1e-1/df}}
% \subfigure[worst-case Young's modulus for $D=0.01$ (nominal SIMP topology)]
%{\includegraphics[width=.46\textwidth]{{smoothcanti300_150Ed=7e-01v=1e-01r=4.5e+00eps=0penal=4mu=1e-07mu0=1e-7vol=average.mat-E-s}.png}\label{amavg/1e-1/Es}}\hfill
%\subfigure[worst-case Young's modulus for $D=0.01$ (topology from robust
% optimization)]
%{\includegraphics[width=.46\textwidth]{{smoothcanti300_150Ed=7e-01v=1e-01r=4.5e+00eps=0penal=4mu=1e-07mu0=1e-7vol=average.mat-E-f}.png}\label{amavg/1e-1/Ef}}
 \caption{Optimization results obtained using the uncertainty model $\Delta_{\text{ad}}^{\text{AM}}(\bb{\rho})$. Displayed are worst-case material variations for
 different total uncertainty bounds and material parameters $E_0=1$, $E_D=0.7$. Left column: worst-case material distributions for nominal topology. Right column: worst-case material distributions for robust topology.}\label{fig:am-average2}
\end{figure*}
\subsection{Robust topology optimization with material degeneration}
As for the additive manufacturing scenarios, only a minor effect of material uncertainties was observed, a second scenario is studied, which allows more easily to investigate the strength of our robust model and algorithm. Specifically, we will use a significantly lower
material parameter $E_D$ modelling that the material is heavily degraded at local spots. As this situation clearly corresponds to a degradation scenario, we apply the uncertainty model \eqref{eq:delta-ad-rho} again.
We start our investigations with values $E_0=1$ and $E_D=0.01$. For these
we consider a large number of different total uncertainty bounds ranging from
$D=0.0002$ to $D=0.008$, as the robust topology and compliances are heavily
influenced by this bound. \\
% \begin{table}[!b]
% \caption{Compliance increase in comparison with nominal SIMP topology with only
% stiffest material ($\bb{\delta}\equiv 0$) for robust optimal topology and
% respective worst-case compliance values. $E_0=1$ and $E_D=0.01$. Results for
% $300\times150$ / $600\times300$ FE grid resolution}
% \label{tab:rto}
% {\setlength{\tabcolsep}{.61em}
% \begin{tabular}{|c|c|c|c|}
% \hline
% volume  & robust & nominal (worst-case) & robust (worst-case) \\
% \hline
% $D = 0.0002$ & +1.19\% / +0.47\% & +17.4\% / +12.0\% & +13.6\% / +9.60\%  \\
% \hline
% $D = 0.0003$ & +1.96\% / +0.96\% & +25.4\% / +19.2\% & +18.5\% / +14.2\% \\
% \hline
% $D = 0.0004$ & +2.80\% / +1.52\% & +34.3\% / +26.2\% & +22.7\% / +18.2\% \\
% \hline
% $D = 0.0005$ & +3.38\% / +2.06\% & +40.9\% / +34.0\% & +26.3\% / +22.0\% \\
% \hline
% $D = 0.0006$ & +3.87\% / +2.54\% & +48.0\% / +40.5\% & +29.7\% / +25.4\% \\
% \hline
% $D = 0.001$ & +5.71\% / +4.16\% & +80.1\% / +71.5\% & +42.5\% / +38.0\% \\
% \hline
% $D = 0.002$ & +7.47\% / +6.16\% & +148\% / +136\% & +71.7\% / +66.2\% \\
% \hline
% $D = 0.003$ & +8.01\% / +6.55\% & +216\% / +199\% & +99.8\% / +95.0\% \\
% \hline
% $D = 0.004$ & +8.64\% / +7.00\% & +283\% / +261\% & +127\% / +123\% \\
% \hline
% $D = 0.008$ & +10.5\% / +8.88\% & +532\% / +496\% & +230\% / +225\% \\
% \hline
% \end{tabular}
% }
% \end{table}
\begin{table}[!b]
\caption{Compliance increases in comparison to nominal topology with fully
stiff material $E_0$ ($\bb{\delta}\equiv 0$); for a detailed explanation of columns 2 to 4, see \tabref{tab:am-linear}. Material parameters are now chosen as $E_0=1$ and $E_D=0.01$ corresponding to a serious degradation scenario.}
\label{tab:rto}
{\setlength{\tabcolsep}{.61em}
\begin{tabular}{|c|c|c|c|}
\hline
\makecell{total\\ uncertainty}  & \makecell{w.-c. topo,\\ worst-case $\bb{\delta}$} & \makecell{w.-c. topo,\\ $\bb{\delta}\equiv 0$} & \makecell{nom. topo,\\ worst-case $\bb{\delta}$}\\
\hline
$D = 0.0002$ & +1.19\% & +17.4\% & +13.6\% \\
\hline
$D = 0.0003$ & +1.96\% & +25.4\% & +18.5\% \\
\hline
$D = 0.0004$ & +2.80\% & +34.3\% & +22.7\% \\
\hline
$D = 0.0005$ & +3.38\% & +40.9\% & +26.3\% \\
\hline
$D = 0.0006$ & +3.87\% & +48.0\% & +29.7\% \\
\hline
$D = 0.001$ & +5.71\% & +80.1\% & +42.5\% \\
\hline
$D = 0.002$ & +7.47\% & +148\% & +71.7\% \\
\hline
$D = 0.003$ & +8.01\% & +216\% & +99.8\% \\
\hline
$D = 0.004$ & +8.67\% & +283\% & +127\% \\
\hline
$D = 0.008$ & +10.5\% & +532\% & +230\% \\
\hline
\end{tabular}
}
\end{table}
\begin{figure*}[!ht]
\centering
\subfigure[worst-case distribution of Young's modulus for $D=0.0002$ (nominal topology)]
 {\includegraphics[width=.42\textwidth]{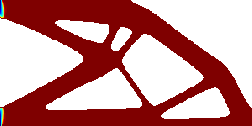}\label{rto/2e-4/ds}}\hfill
 \raisebox{-9mm}{\subfigure{ \begin{tikzpicture}[scale=1.3]
  \node at (0,0)
  {\includegraphics[width=.025\textwidth]{colormap-E.png}};
  \node[text width=.025\textwidth] at (0.08,1.55) {$1$};
  \node[text width=.025\textwidth] at (0.08,-1.55) {$0$};
  \node[text width=.025\textwidth] at (-0.16,-1.85) {$(E^{\text{eff}}$)};
\end{tikzpicture}
}}\hfill
\subfigure[worst-case distribution of Young's modulus for $D=0.0002$ (robust topology)]
 {\includegraphics[width=.42\textwidth]{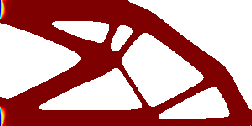}\label{rto/2e-4/df}}
\subfigure[worst-case distribution of Young's modulus for $D=0.0006$ (nominal topology)]
 {\includegraphics[width=.42\textwidth]{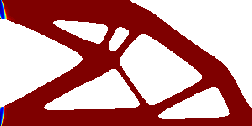}\label{rto/6e-4/ds}}\hfill
\subfigure[worst-case distribution of Young's modulus for $D=0.0006$ (robust topology)]
 {\includegraphics[width=.42\textwidth]{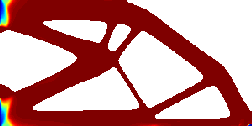}\label{rto/6e-4/df}}
%\subfigure[worst-case for $D=0.0008$ (nominal topology)]
% {\includegraphics[width=.48\textwidth]{{smoothcanti300_150Ed=1e-02v=8e-04r=4.5e+00eps=0penal=4mu=1e-06mu0=1e-6vol=linear.mat-E-s}.png}\label{rto/8e-4/ds}}\hfill
%\subfigure[worst-case for $D=0.0008$ (robust topology)]
% {\includegraphics[width=.48\textwidth]{{smoothcanti300_150Ed=1e-02v=8e-04r=4.5e+00eps=0penal=4mu=1e-06mu0=1e-6vol=linear.mat-E-f}.png}\label{rto/8e-4/df}}
\subfigure[worst-case distribution of Young's modulus for $D=0.001$ (nominal topology)]
 {\includegraphics[width=.42\textwidth]{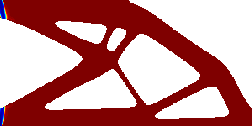}\label{rto/1e-3/ds}}\hfill
\subfigure[worst-case distribution of Young's modulus for $D=0.001$ (robust topology)]
 {\includegraphics[width=.42\textwidth]{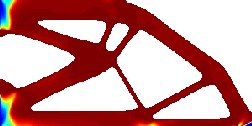}\label{rto/1e-3/df}}
%\subfigure[worst-case for $D=0.002$ (nominal topology)]
% {\includegraphics[width=.46\textwidth]{{smoothcanti300_150Ed=1e-02v=2e-03r=4.5e+00eps=0penal=4mu=1e-06mu0=1e-6vol=linear.mat-E-s}.png}\label{rto/2e-3/ds}}\hfill
%\subfigure[worst-case for $D=0.002$ (robust topology)]
% {\includegraphics[width=.46\textwidth]{{smoothcanti300_150Ed=1e-02v=2e-03r=4.5e+00eps=0penal=4mu=1e-06mu0=1e-6vol=linear.mat-E-f}.png}\label{rto/2e-3/df}}
\subfigure[worst-case distribution of Young's modulus for $D=0.004$ (nominal topology)]
 {\includegraphics[width=.42\textwidth]{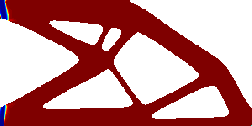}\label{rto/4e-3/ds}}\hfill
\subfigure[worst-case distribution of Young's modulus for $D=0.004$ (robust topology)]
 {\includegraphics[width=.42\textwidth]{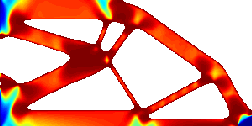}\label{rto/4e-3/df}}
 % \subfigure[worst-case for $D=0.008$ (nominal topology)]
 %{\includegraphics[width=.46\textwidth]{{smoothcanti300_150Ed=1e-02v=8e-03r=4.5e+00eps=0penal=4mu=1e-06mu0=1e-6vol=linear.mat-E-s}.png}\label{rto/8e-3/ds}}\hfill
%\subfigure[worst-case for $D=0.008$ (robust topology)]
% {\includegraphics[width=.46\textwidth]{{smoothcanti300_150Ed=1e-02v=8e-03r=4.5e+00eps=0penal=4mu=1e-06mu0=1e-6vol=linear.mat-E-f}.png}\label{rto/8e-3/df}}
 \caption{Optimization results showing worst-case material variations for
 increasing total uncertainty bounds. The material parameters $E_0=1$, $E_D=0.01$ correspond to the situation of a serious material degradation. Left column: worst-case material distributions for nominal topology. Right column: worst-case material distributions for robust topology.}\label{fig:rto2}
\end{figure*}
Selected optimal results are visualized in %Figs. \ref{fig:rto1}-\ref{fig:rto3}
Figure \ref{fig:rto2} and the relative compliance values can be found in \tabref{tab:rto}. 
%We remark that
%for $D\geq0.008$ the topology variable $\bb{\rho}$ started failing to give a
%completely black and white design. 
As in the case of additive manufacturing, the
cantilever gets thicker at the support, but this time by a much larger margin.
It can be seen quite nicely, how -- with increasing material uncertainty -- more and more of the support is occupied with ``degenarate'' material.
The compliance values in \tabref{tab:rto} reveal that now the worst-case compliance is decreased immensely for the robust topology compared to the nominal one, while sacrificing comparatively little structural
performance for the case without material degeneration. For the nominal SIMP
topology, degradation is almost exclusively observed at the support, whereas for the robustly optimized topology degradation starts appearing at the load already for $D=0.0002$ and is getting more pronounced for higher values of $D$.

At this point we would like to recall that -- as outlined in \secref{sec:dis_interpol} -- in the degeneration case, i.e. $E_D \ll E_0$, the difference between the inverse interpolation model \eqref{eq:matdelta} and the linear interpolation model \eqref{eq:damage-model} may become substantial. 
In order to investigate this in practice, we would first like
to get a sense of what results for the linear interpolation would look like.
Simply replacing model \eqref{eq:matdelta} by \eqref{eq:damage-model} in the lower-level problem however leads to a
non-concave maximization problem, for which it is very hard to find a result
close to a global maximizer. In order to facilitate finding at least a good
local maximum, we lend from the idea of a continuation of the penalty parameter
in standard topology optimization using the interpretation of the inverse
interpolation model as the RAMP scheme (\ref{eq:ramp}) with $q=(E_0-E_D)/E_D$. 
We start by solving the same problem as
before with $q=(E_0-E_D)/E_D$ and continuously decrease $q$ unto a value of 0.
Note that the lower-level problem fails to be strictly convex for
$q<(E_0-E_D)/E_D$, but that we are still able to obtain locally optimal
solutions using IPOPT. At convergence of the continuation scheme, i.~e. for $q=0$, the interpolation between $E_D$ and $E_0$ becomes linear. 
It can be easily checked that the objective becomes convex in this case. 
As we are maximizing the objective in the lower level problem, this means that have to expect of a worst-case material distribution which is close to binary. 
On the other hand, for $\bb{\delta}\in \{0,1\}^n$, the inverse interpolation model yields
the same effective material properties as the linear interpolation model and thus the same
compliance. This means, the less intermediate values of $\bb{\delta}$ appear
in the worst-case material distribution, when we apply the inverse interpolation model, the closer the gap to the compliance obtained with the linear model is expected to be. Finally, we would like to recall that -- for each fixed topology $\bb{\rho}$ -- the worst-case compliance computed using the inverse model is an upper bound for the
true worst-case compliance (in the sense of a global maximum) using the linear model. Moreover the approximate solution computing using the RAMP continuation scheme constitutes a lower bound for the same value.

We can now apply the RAMP scheme in two different ways. We either can use it to check the conservatism of the result obtained from the solution of the minimax problem based on the inverse interpolation model. In that case, the RAMP scheme is applied only once for the final robustly optimized topology. If the gap between lower and upper bound for the worst case compliance turns out to be small, we can be sure that we did not waste much performance by the effective relaxation of the uncertainty set (cf. again \figref{fig:external-approx} for an illustration). The second possibility is to directly perform the optimization based on the RAMP approach, i.~e. in each iteration we essentially attempt to solve the adversarial problem formulated based on the linear interpolation model. It has to be mentioned that -- from a theoretical point of view -- this is not rigorous as the marginal function might by even discontinuous whenever we fail to find the global maximum of the inner problem. Nevertheless, we follow this second route in the remainder of this section.

Technically, after trying a number of different continuation schemes, we settled on 10
continuation steps from the inverse to the linear model as a reasonable compromise between
a smooth continuation and computational performance. We performed numerical experiments with material parameters $E_0=1$ and $E_D=.01$ as well as total uncertainty bounds varying from $0.0005$ to $0.02$.
It is interesting to note that we did not observe any technical difficulties which could be traced back to a potential discontinuity of the marginal function during numerical computations. The results in terms of relative compliances are summarized in \tabref{tab:rto-ramp}. The associated optimal topologies and worst-case results are illustrated in
\figref{fig:rto-ramp2}. In \tabref{tab:rto-ramp}, for either of the worst-case computations we state three different compliance values: The worst-case compliance obtained using the inverse model (which serves as an upper bound here), the result obtained using the RAMP continuation scheme and the result obtained from attempting to solve the worst-case problem in a brute force way for the linear model. While
we observe that there is only one exceptional case ($D=.001$ for the nominal topology), for which the worst-case compliance obtained by the direct application of the linear model is better than the one obtained by the continuation scheme, in all other cases the continuation scheme turns out to be
by far superior. 
It should be noted that for the direct solution of the
optimization problem with linear interpolation and larger uncertainty bounds IPOPT sometimes
struggled to obtain a solution with values of $\bb{\delta}$ close to 0, explaining the extreme difference in comparison with the continuation scheme. It is also worth to note that for the nominal topology for which a close to binary worst-case material distribution has been already obtained based on the inverse model, the compliance calculated by the continuation scheme comes very close to the upper bound, which indicates that we found a quite good approximation of the true worst case also in the linear interpolation model. Even more surprising is that the situation for the final worst topologies is not much different: also here the gap between lower and upper bounds becomes surprisingly tight. The corresponding compliance values differ for larger choices of $D$ by only 10-20\%. Besides this, the worst-case compliance
values shown in \tabref{tab:rto-ramp} again look rather impressive: in almost all cases the compliance loss can be consistently more than halved for the robust topology compared with
the nominal one, while again the compliance of the worst case topology is only little larger than the nominal topology for the best case material parameters corresponding to $\bb{\delta}\equiv 0$.\\

Looking at \figref{fig:rto-ramp2} we first see again that for an
increasing bound on total material degradation the robust topology becomes thicker
along the support and more and more degraded material appears where the
force is applied. On the other hand, in sharp contrast to the results obtained using the inverse interpolation model, now also for the robustly optimized topologies a binary degradation pattern is observed. As outlined above, this is expected due to the convex nature of the maximization problem using the linear interpolation scheme.

%Moreover, surprisingly, for the larger degradation volumes the amount of
%material used at the supports first decreases again. 

\begin{table*}[] \caption{Compliance increases in comparison to nominal
topology with only stiffest material ($\bb{\delta}\equiv 0$); see \tabref{tab:am-linear} for a detailed explanation of columns 2 to 4. Material parameters are chosen to be $E_0=1$ and $E_D=0.01$. 
Worst-case compliances were computed in three different ways: with a RAMP
continuation scheme in 10 steps from $q=(E_0-E_D)/E_D$ to $q=0$ (``contin'', in bold),
direct solution without continuation for $q=0$ (``direct'') and inverse
interpolation (``inverse'') for comparison.}
\label{tab:rto-ramp}
{%\setlength{\tabcolsep}{.51em}
\setlength{\tabcolsep}{1.5em}
\begin{tabular}{|c|c|c|c|c|c|c|c|}

%\makecell{total\\ uncertainty}  & \makecell{nom. topo,\\ worst-case $\bb{\delta}$} & %\makecell{w.-c. topo,\\ $\bb{\delta}\equiv 0.4$} & \makecell{w.-c. topo,\\ worst-case %$\bb{\delta}$}\\

\hline
\makecell{total\\ uncertainty}   & \makecell{w.-c. topo,\\ $\bb{\delta}\equiv 0$} & \multicolumn{3}{c|}{\makecell{nom. topo,\\ worst-case $\bb{\delta}$}} &
\multicolumn{3}{c|}{\makecell{w.-c. topo,\\ worst-case $\bb{\delta}$}}
\\
\hline
  & & contin & direct & inverse & contin & direct & inverse \\
\hline
\hspace{-.6em} $D = 0.0005$\hspace{-.2em} & +0.28\% & \textbf{+32.6\%} & +17.6\%
& +42.3\% & \textbf{+22.4\%} & +15.0\% & +37.7\%
\\
\hline
$D = 0.001$ & +5.75\% & \textbf{+67.0\%} & +70.5\% & +81.6\% & \textbf{+29.6\%}
& +28.7\% & +43.1\%
\\
\hline
$D = 0.002$ & +8.78\% & \textbf{+139\%} & +77.9\% & +158\% & \textbf{+59.0\%} &
+52.4\% & +73.2\%\\
\hline
$D = 0.004$ & +8.62\% & \textbf{+286\%} & +229\% & +306\% & \textbf{+119\%} &
+61.2\% & +135\%\\
\hline
$D = 0.006$ & +7.30\% & \textbf{+425\%} & +281\% & +444\% & \textbf{+180\%} &
+138\% & +207\% \\
\hline
$D = 0.008$ & +7.55\% & \textbf{+554\%} & +380\% & +575\% & \textbf{+239\%} &
+187\% & +270\% \\
\hline
$D = 0.01$ & +7.19\% & \textbf{+675\%} & +440\% & +699\% & \textbf{+296\%} &
+223\% & +330\% \\
\hline
$D = 0.02$ & +12.5\% & \textbf{+1203\%} & +442\% & +1234\% & \textbf{+539\%} &
+449\% & +597\% \\
\hline
\end{tabular}
}
\end{table*}
\begin{figure*}[]
\centering
\subfigure[worst-case distribution of Young's modulus for $D=0.0005$ (nominal topology)]
{\includegraphics[width=.42\textwidth]{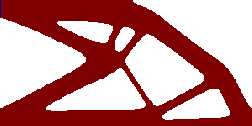}\label{ramp/0.0005/Esc}}\hfill
\raisebox{-9mm}{\subfigure{ \begin{tikzpicture}[scale=1.3]
  \node at (0,0)
  {\includegraphics[width=.025\textwidth]{colormap-E.png}};
  \node[text width=.025\textwidth] at (0.08,1.55) {$1$};
  \node[text width=.025\textwidth] at (0.08,-1.55) {$0$};
  \node[text width=.025\textwidth] at (-0.16,-1.85) {$(E^{\text{eff}}$)};
\end{tikzpicture}
}}\hfill
 \subfigure[worst-case distribution of Young's modulus for $D=0.0005$ (robust topology)]
 {\includegraphics[width=.42\textwidth]{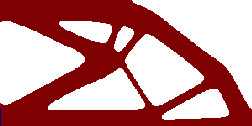}\label{ramp/0.0005/Efc}}
 \subfigure[worst-case distribution of Young's modulus for $D=0.008$ (nominal topology)]
 {\includegraphics[width=.42\textwidth]{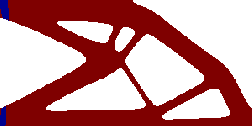}\label{ramp/0.0080/Esc}}\hfill
\subfigure[worst-case distribution of Young's modulus for $D=0.008$ (robust topology)]
 {\includegraphics[width=.42\textwidth]{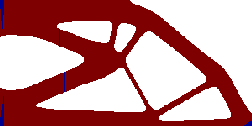}\label{ramp/0.0080/Efc}}
 %\subfigure[worst-case for $D=0.01$ (robust topology)]
 %{\includegraphics[width=.48\textwidth]{{robustrampconti300_150Ed=0.01v_delta=0.0100-Esc}.png}\label{ramp/0.0100/Esc}}\hfill
 %\subfigure[worst-case for $D=0.01$ (robust topology)]
 %{\includegraphics[width=.48\textwidth]{{robustrampconti300_150Ed=0.01v_delta=0.0100-Efc}.png}\label{ramp/0.0100/Efc}}
 \subfigure[worst-case distribution of Young's modulus for $D=0.02$ (nominal topology)]
 {\includegraphics[width=.42\textwidth]{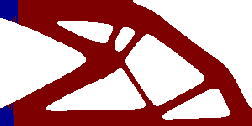}\label{ramp/0.0200/Esc}}\hfill
\subfigure[worst-case distribution of Young's modulus for $D=0.02$ (robust topology)]
 {\includegraphics[width=.42\textwidth]{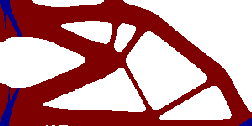}\label{ramp/0.0200/Efc}}
 \caption{Optimization results obtained using the RAMP-type continuation to a linear
 interpolation model. Displayed are worst-case distributions of Young's moduli for different bounds on total material variation. Material parameters are $E_0=1$, $E_D=0.01$. It is clearly observed that enabling the use of the linear interpolation law by the continuation scheme, a binary degeneration pattern is now also obtained for the robustly optimized topologies.}\label{fig:rto-ramp2}
\end{figure*}

\subsection{Computational efficiency}
All computations were done on a machine with an Intel Xeon CPU E3-1280 v2 with
four cores with 3.60 GHz and 32 GB of memory. Despite the very strict optimality
and constraint tolerance of $10^{-10}$ used in IPOPT, the second order algorithm
and the reuse of the previous iterates brought down the iteration numbers for
each solution of the lower-level problem to a reasonable range. An individual lower level iteration took about one second for
$300\times150$ finite elements. For the additive manufacturing example with the
average-based uncertainty set, the first solution of the lower-level problem
took 21-22 interior point iterations, each solution after that only 4-5 and occasionally 6
interior point iterations. For the material degradation examples
with $E_D=0.01$, the first outer iteration needed between 13 and 59 lower-level
iterations, which decreased to no more than 10 iterations after the first 3-8
outer iteration steps. For the RAMP
continuation, IPOPT struggled to solve the lower-level problem as efficiently.
Without a noticeable change between outer iterations, the number for the RAMP continuation increased with growing total uncertainty bound from about 300 iterations ($D=0.0005$) to 1150 ($D=0.02$). It should be mentioned however that the warm start strategy was much less successful in this case, leaving room for improvements in the future.\\
For larger problem sizes, the iteration numbers and computation times are
expected to scale quite well, as the main complexity comes from the solution of
a strictly convex problem with a second-order method. We tested this for the
examples with $E_D=0.01$ on a grid of $600\times300$ finite elements and found a
quite consistent scaling for different total uncertainty bounds. With 4
times the finite element number, average times per outer iteration rose by a
factor of 4.8, while the average number of iterations for the lower-level
problem solution actually decreased by a factor of 0.82.
\section{Conclusion}
We proposed a model and an algorithm for the robust optimal compliance design with unpredictable material variations and applied it
successfully to a number of numerical examples. For additive manufacturing examples the effect of the uncertain material parameters turned out to be minor even in the worst case. This is an interesting result by itself. %The compliance decrease could
%even be mitigated further using the proposed robust optimization methods,
%however not by much and with a slow convergence speed. 
On the other hand, the robust topology optimization accounting for material degradation showed very promising results. A rigorous worst-case degeneration formulation could be
solved with a still manageable increase in computational cost compared to the nominal solution. Moreover, the robustly optimized topologies showed a much better worst-case performance than the nominal ones. 

One partly open point is however that the exact nature of robustness of the optimal result and its performance is not entirely clear. While the worst-case performance for the robust topologies improved strongly, it consistently remained far away from that
of the nominal solution. This might be partly due to the nature of the studied examples, however another reason might be the fact that the convexification we applied for the lower level problem leads to conservative solutions. To get a feeling about the latter, a RAMP continuation strategy
was suggested, which lead to a binary worst-case material degeneration. By this, the gap could be tightened. On the other hand there is no guarantee, that the worst case was identified in this case. Moreover, the continuation strategy came at a substantially increased computational demand. 

Future research directions include studying more elaborate schemes for
the RAMP continuation and possibly different solvers for the lower-level problem
in the non-convex RAMP case. %This could also allow for the use of
%the RAMP continuation for even smaller values of $E_D$, getting closer to an
%actual worst-case topology optimization with misplaced material. 
For additive
manufacturing, different cost functionals might be considered which are more
sensitive to relatively small changes in the material parameters. One difficulty in this case would be however that even with the inverse interpolation model the inner maximization problem might fail to be convex. As a consequence alternative methods from global optimization would be required to identify the worst-case material distribution.

\begin{acknowledgements}
The authors thank the German Research Foundation (DFG) for funding
this research work within Collaborative Research Centre 814, subproject C2.
\end{acknowledgements}

\newpage

\bibliographystyle{spbasic}
\bibliography{optimization,references}
\end{document}